\newtheorem{thm}{Theorem}[]
\newtheorem{cor}{Corollary}
\newtheorem{lem}{Lemma}
\newtheorem{defn}{Definition}
\newcommand{\eps}{\text{$\varepsilon$}}
\newcommand{\lb}{\left\{\right.}
\newcommand{\rb}{\left.\right\}}
\newcommand{\IH}{\text{${\mathbb{H}}$}}
\newcommand{\IR}{\text{${\mathbb{R}}$}}
\newcommand{\IRd}{\text{${\mathbb{R}^d}$}}
\newcommand{\IC}{\text{${\mathbb{C}}$}}
\newcommand{\IZ}{\text{${\mathbb{Z}}$}}
\newcommand{\IZd}{\text{${\mathbb{Z}^d}$}}
\newcommand{\IN}{\text{${\mathbb{N}}$}}
\newcommand{\IT}{\text{${\mathbb{T}}$}}
\newcommand{\x}{\bf {x}}
\renewcommand{\t}{\mathbf{t}}
\renewcommand{\k}{\mathbf{k}}
\newcommand{\comment}[1]{}
\newcounter{ang}
\newcounter{zal}
\title{On Parseval wavelet frames via   Multiresolution Analyses}
\author{A. San Antol\'{\i}n}
\address{Departamento de  Matem\'aticas,  Universidad de Alicante,
 03080 Alicante, Spain.}\email{angel.sanantolin@ua.es}
\thanks{The author was
partially supported  by  MEC/MICINN grant  \#MTM2011-27998 (Spain)  and by Generalitat Valenciana  grant GV/2015/035}
\keywords{Dilation matrix, Extension Principles,
Fourier transform,  refinable function, tight wavelet frame}
\thanks{2010 Mathematics
Subject Classification: 42C40, 42C15}
\date{}
\begin{document}

\begin{abstract}
We give a characterization of all  Parseval  wavelet frames  arising from a given frame multiresolution analysis. As a consequence, we obtain a description of all  Parseval  wavelet frames  associated with a frame multiresolution analysis. These results are based on a version of Unitary Extension Principle and Oblique Extension Principle with the  assumption that the origin is a point of approximate continuity of the Fourier transform of the involved refinable functions.
\end{abstract}

\maketitle

\pagestyle{myheadings}

\markboth{ON PARSEVAL WAVELET FRAMES VIA  MULTIRESOLUTION ANALYSES}{SAN ANTOL\'IN}


\section{Introduction} \label{section1}

We are interested in the study of methods for constructing tight wavelet frames.
In this paper,  we emphasis on tight wavelet frames can be constructed via multiresolution analyses and  extension principles. Mallat \cite{M:89} and Meyer
\cite{Me:90} introduced the definition of multiresolution analysis (MRA) as a general method for constructing wavelets. In order to construct wavelet frames, the requirements on the definition of MRA were weakened. In this sense,  the notion of a frame
multiresolution analysis (FMRA)   was formulated by
Benedetto and  Li \cite{BL:98} as a natural
extension of MRA. 
Furthermore, a generalized multiresolution analysis was first   introduced  by Baggett,  Medina and Merril \cite{BMM:99} and  Papadakis \cite{P:01} independently, see also the paper by
de Boor,  DeVore and A. Ron  \cite{BDR:93}.
 Gripenberg \cite{Gr:95} and Hern\'andez and Weiss \cite{HW:96} proved a characterization of all orthonormal wavelets associated to a multiresolution analysis in terms of its wavelet dimension function. Any orthonormal wavelet is associated with a generalized multiresolution analysis was proved by Papadakis \cite{Pa:99}. In the paper by Kim, Kim and Lim \cite{KKL:01} (see also Kim, Kim, Lee and Lim \cite{KKLL:02}),   characterizations of the Riesz wavelets which are associated with a multiresolution analysis were proved. A generalization of theses results are given by Bownik and Garrig\'os \cite{BG:04}. Note that characterizations of biorthogonal wavelets from biorthogonal multiresolution analysis are proved in \cite{KKL:01}, \cite{BG:04} (see also Calogero and Garrig\'os \cite{CG:01}).  Zalik \cite{Za:99}
 introduced the notion of Riesz wavelet obtained by a multiresolution analysis. Moreover, he gave  necessary and sufficient conditions on a given Riesz wavelet to be obtained by a multiresolution analysis. Bownik \cite{B:03} studied both the notion of Riesz basis associated with a generalized  multiresolution analysis and Riesz basis obtained by a generalized multiresolution analysis and proved that these two notions are equivalent. Charazterizations of Parseval wavelet frames associated with generalized multiresolution analysis are proved by Baggett, Medina and Merrill \cite{BMM:99} and by Baki\'c \cite{B:06}.

 A slight different point of view for  constructing  wavelet frames was first proposed in the Unitary Extension  Principle (UEP) by  Ron and Shen \cite{RS1} (see also \cite{RS11}). The UEP leads to explicit constructions of tight wavelet frames generated by a refinable function.
A more flexible way for constructing wavelet frames is the so called Oblique Extension Principle (OEP). The OEP was introduced by Daubechies, Han, Ron and Shen \cite{DHRS}. These extension principles have been developed by Benedetto and  Treiber \cite{BT:01},  Chui, He and St\"ockler \cite{CHS:02},  Chui,  He,  St\"ockler and Sun \cite{CHSS:03}, Han   \cite{H:10}, \cite{H:12}, Stavropoulos \cite{S:12} and
 Atreas, Melas and Stavropoulos \cite{AMS:14}. Observe that in these papers, there are also proved that the obtained
  sufficient conditions  are also necessary.

  Extensive studies on multiresolution analysis and extension principles are enclosed, for instance, in \cite{Ch:03}.

Here, we give a solution to the problem of characterizing all Parseval wavelet frames arising from a fix frame multiresolution analysis. As a consequence, we obtain a new description of all Parseval wavelet frames associated with  a frame multiresolution analysis. The proofs of these results are based on the characterization of the scaling functions in \cite{KS:08} and on a version of Unitary Extension Principle and Oblique Extension Principle with no regularity at the origin on the modulus of the Fourier transform of the involved refinable functions. In particular, we invoke the notion of approximate continuity. Furthermore, we characterize all Parseval wavelet frames  can be constructed via Oblique Extension Principles. We observe that the origin  must be a point of density for the support of the Fourier transform of a refinable function used for constructing Parseval wavelet frames via OEP.

Versions of the UEP and OEP without any extra condition on refinable functions where first proved by  Han   \cite{H:10}, \cite{H:12}, were the contexts is in distribution spaces. The assumption at the origin of the Fourier transform of involved refinable functions is a limit in sense of distributions. At the end of this manuscript we will see  that  the condition used by Han and the condition used here are of different nature.

Although the results we present here are new in the classical context, i.e., in $L^2(\IR)$ with the dyadic dilation, we consider functions in $L^2(\IR^n)$, $n \geq 1$, and the dilation is given by $A: \IR^n \to \IR^n$, a fix expansive linear map such that $A(\IZ^n) \subset \IZ^n$.

Before formulating our results let us introduce some notation and definitions.

The sets of strictly positive integers, integers, real and complex numbers will be
denoted by $\IN$, $\IZ$ ,  $\IR$ and $\IC$ respectively.
 $\IT^n =
~\IR^n/\IZ^{n}$, $ n \geq 1$,  and with  some abuse of the notation we
consider
 also that $\IT^n$ is the unit cube
 $[0,1)^{n}.$

   We will denote $B_r({\bf y}) = \{ {\bf
 x}\in \IR^n : |{\bf x} - {\bf y}| <r\},$ and   will write $B_r$ if $ {\bf y}$ is the
 origin. If $A:\IR^n \to \IR^n$ is a linear map, $A^*$ will mean the adjoint of $A$.
 With some abuse in the notation, if we write $A$ we also mean the corresponding matrix respect to the canonical basis.
 Moreover $d_A= | \det A |$.
  For a Lebesgue measurable set $E\subset\IR^n$, $E^c= \IR^n \setminus E$ and the Lebesgue
measure of $E$ in $\IR^n$ will be denoted by $|E|_n$. If ${\bf
 x}\in \IR^n$ then ${\bf
 x}+E = \{ {\bf
 x}+ {\bf y} : \quad\mbox{for}\quad {\bf y} \in E \}.$  We will denote $ A(E) = \lb {\bf x}\in \IR^n: {\bf
 x}= A({\bf t}) \quad\mbox{for}\quad {\bf t}\in E\rb$ and the volume of $E$ changes under $A$ according to $|AE|_n= d_A |E|_n$.
 The characteristic function of a set $E \subset \IR^n$ will be denoted by $\chi_E$, i.e. $\chi_E({\x})=1$ if ${\x} \in E$ and
 $\chi_E({\x})=0$ otherwise.

  If we write  $L^p(\IR^n)$, $n \geq 1$, $1 \leq p \leq \infty$,
we mean the usual Lebesgue space. If we write $f \in L^p_{loc}(\IR^n) $ we mean the linear space of
all measurable functions such that $f \chi_K \in L^p(\IR^n)$ for every $K \subset  \IR^n$ compact sets.

If we
 take $f\in L^p(\IT^n)$ we will understand that $f$ is defined on
 the whole space $\IR^n$ as a $\IZ^n$-periodic function.

 A linear map  $A$ is called expansive if all
(complex) eigenvalues of $A$ have absolute value greater than $1$.
If $A$ is invertible, we consider the  operator $D_A$ on $L^2(\IR_n)$ defined by $D_Af({\t})=  d_A^{1/2}  f(A{\t})$.
   The translation of a
 function $f\in L^2(\IR^n)$ by ${\bf b}\in \IR^{n}$ will be denoted by
  $\tau_{{\bf
 b}}f(\t) = f({\t}-{\bf b}).$ For a subspace $S$ of $L^2(\IRd)$,
 $$
 {D}_{A}S= \{ {D}^j_{A}f : f \in S \}, \qquad \textrm{and} \qquad \tau_{{\bf
 b}}S= \{ \tau_{{\bf
 b}}f : f \in S \}.
 $$
 If we write ${D}^{-1}_{A} $ we mean the operator ${D}_{A^{-1}}$, ${D}^{0}_{A} $ is the identity map and  ${D}^{\ell}_{A} $, $\ell \in \mathbb{N} $, is the $\ell$-th composition of the operator ${D}_{A}$ with itself.

 If $A:\IR^n \to \IR^n$ is an expansive linear invertible map such that
$A(\IZ^n) \subset \IZ^n$, then the quotient groups  $\IZ^{n}/A(\IZ^{n})$ and $A^{-1}(\IZ^{n})/\IZ^{n}$ are well defined.
We will denote by  $\Omega_A \subset \IZ^n$ and $\Gamma_A \subset A^{-1}(\IZ^n)$ a full
collection of representatives of the cosets of  $\IZ^{n}/A(\IZ^{n})$ and $A^{-1}(\IZ^{n})/\IZ^{n}$ respectively.
 Recall that there are exactly $d_A$ cosets of $\IZ^{n}/A(\IZ^{n})$  (see \cite{GM:92} and \cite[p.109]{Wo:97}). Thus
 there are exactly $d_A$ cosets of $A^{-1}(\IZ^{n})/\IZ^{n}$.

For a  given $\phi \in L^2(\IR^n)$, set
\begin{equation*} \label{Phi}
[\phi,\phi](\t)= \sum_{{\bf k}\in \IZ^n}|\phi({\bf t}+
{\bf k})|^2
\end{equation*}
and denote
\begin{equation*}\label{d:2}
\mathcal{N}_{\phi} = \{ \t\in \IR^{n} ~:~  [\phi,\phi](\t) = 0 \}.
\end{equation*}

For a measurable function $f: \IR^n \to \IC$ the support
of $f$ is defined to be
$\textrm{supp}(f)= \{\mathbf{t} \in \IR^n ; f(\mathbf{t})\neq 0\} $.

The sets are defined modulo a null measurable set and we will
understand some equations as almost everywhere on
$\IR^n$ or $\IT^n$. Moreover, in order to shorten the notation, we will consider  $0/0
=0$ or $0(1/0) =0$ in some expressions where such an
indeterminacy appears.

The theory of frames was introduced by Duffin and
Schaeffer \cite{DS:52}.
A sequence $\{\phi_n \}_{n=1}^{\infty}$ of elements in a separable
Hilbert space ${\IH}$ is a {\it frame} for ${\IH}$ if there exist
constants  $C_1,C_2>0$ such that
\[
C_1\|h\|^2 \leq \sum_{n=1}^{\infty}|\langle h,\phi_n\rangle|^{2}\leq
C_2\|h\|^2, \qquad \forall h \in {\IH},
\]
where $\langle\cdot , \cdot \rangle$ denotes the inner product on $\IH$.
The constants $C_1$ and $C_2$ are called {\em frame bounds}.
The definition
implies that a frame is a complete sequence of elements of  ${\IH}$. A frame
$\{\phi_n \}_{n=1}^{\infty}$ is \emph{tight} if we may choose $C_1=C_2$,
and if in fact  $C_1=C_2=1,$ we will call the frame a
 {\em Parseval frame}.
A sequence $\{h_n \}_{n=1}^{\infty}$ of elements in a Hilbert space
${\IH}$ is a {\it  frame sequence}  if it is a frame for
${\overline{\mbox{span}}} \{h_n \}_{n=1}^{\infty}.$

Let $A : \IR^{n} \to \IR^n$ be an expansive linear map such that $A(\IZ^n) \subset \IZ^n$. Let $ \Psi=\{\psi_{1},\dots,\psi_{N} \} \subset L^2(\IR^n)$ be a set of functions, we call
\begin{eqnarray} \label{SWF}
X_{\Psi}:&= & \{D_A^j\tau_{{\k}}\psi_{\ell} ~ : ~  j \in \IZ, \ \k \in \IZd, \ 1 \le \ell \le N\}; \\
X^q_{\Psi}: & = & \{D_A^j\tau_{{\k}}\psi_{\ell} \, : \,  j \geq 0, \ \k \in \IZd, \ 1 \le \ell \le N \} \label{SWFq}\\ & & \bigcup \{ d_A^{j/2} \tau_{{\k}} D_A^j\psi_{\ell} \, : \,  j < 0, \ \k \in \IZd, \ 1 \le \ell \le N \} \nonumber
\end{eqnarray}
the affine system (resp. quasi-affine system) generated by $\Psi$. The set of functions
$ \Psi$
is called a {\em wavelet frame} associated to $A$, if the affine system \eqref{SWF}
is a frame for  $L^2(\IR^n)$. In this case, the affine system \eqref{SWF} is usually called {\em affine frame}.  When the  context is clear we do not write ``associated to $A$''. If this affine  system is a tight frame for
$L^2(\IR^n)$ then $\Psi$ is called a  {\em tight wavelet frame} or {\em tight framelet}. In particular, a tight wavelet
frame with frame bounds equal to $1$ is usually called a {\em Parseval wavelet frame} or a {\em Parseval framelet}.
The functions  $\psi_{\ell}$, $\ell =1, \dots ,N$ are called the {\em generators} of the wavelet frame. If the quasi-affine system \eqref{SWFq} is a frame for $L^2(\IR^n)$, then $X^q_{\Psi}$ is usually called {\em quasi-affine frame}.

Given a
 linear invertible map $A$ as above, by a frame multiresolution analysis associated to the dilation $A$ ($A$-FMRA) we mean  a
sequence of closed subspaces $V_j$, $j\in\IZ$, of the Hilbert space
$L^2(\IR^n)$ that satisfies the following conditions:
\begin{enumerate}
\item[(i)] $ V_j\subset V_{j+1}$ for every $j\in\IZ$;
\item[(ii)] $ f\in V_j$ if and only if  $D_Af \in V_{j+1}$ for every $j\in\IZ$;
\item[(iii)] $\overline{\bigcup_{j\in\IZ} V_j} = L^2(\IR^n)$;
\item[(iv)] There exists a function $\phi\in V_0$, that is called
\emph{scaling function}, such that  $V_0= \overline{\textrm{span} }\{ \tau_{\k}\phi  ~:~
{\bf k}\in\IZ^n \}$.
\end{enumerate}

According to Lemma E bellow, the  condition (iv) can be replaced by
\begin{enumerate}
\item[(iv')]  the system  $\{ \tau_{\k}\phi  ~:~
{\bf k}\in\IZ^n \}$ is a (Parseval) frame for $V_0$.
\end{enumerate}
When the system  $\{ \tau_{\k}\phi ~:~
{\bf k}\in\IZ^n \}$ is an orthonormal basis for $V_0$, then the $A$-FMRA is called an orthonormal multiresolution analysis or
simply a multiresolution analysis ($A$-MRA).

One  of the possible ways for constructing   an $A$-FMRA is to start with a scaling function $\phi \in L^2(\IR^n)$.
A  function $\phi \in  L^2(\IR^n)$ {\it generates an $A$-FMRA} if
$V_0= \overline{\textrm{span}}\{ \tau_{\k}\phi ~:~
{\bf k}\in\IZ^n \}$  and
the subspaces
\begin{equation}\label{c:1} V_j = {\overline{\mbox{span}}}
\{D_A^j\tau_{\k}\phi ~;~ {\k}\in \IZ^{n} \}, \quad j\in\IZ
\end{equation}
 of
the Hilbert space $L^2(\IR^n)$ satisfy the conditions (i) and (iii).

If  $\phi \in  L^2(\IR^n)$  generates an $A$-FMRA, then $\phi \in V_0 \subset V_1$. Thus,
\begin{equation} \label{refinabledefi}
 \phi=  \sum_{\mathbf{k}\in \mathbb{Z}^{n}}a_{\mathbf{k}} D_A\tau_{\k}\phi,
\qquad a_{\mathbf{k}} \in \mathbb{C},
\end{equation}
where the convergence is in $L^2(\mathbb{R}^n)$.
Taking the
Fourier transform, we can write
\begin{equation*} \label{refinable}
\widehat{\phi}(A^{\ast}\mathbf{t}) = H(\mathbf{t})
\widehat{\phi}(\mathbf{t}) \quad \textrm{   a.e. on $ \mathbb{R}^n$}
\end{equation*}
where $H$ is a $\IZ^n$-periodic measurable function.  A function that satisfies a equality as \eqref{refinabledefi}
is called refinable.

Given a multiresolution analysis, the following describes  a standard procedure for constructing wavelet frames.
If $\{V_j\}_{j \in \IZ} \subset L^2(\IR^n)$ is an $A$-FMRA,
we denote by $W_j$ the orthogonal complement of $V_j$ in $V_{j+1}$. Thus, by condition (i), we have  $V_{j+1} = W_j \oplus V_j$. Moreover,
condition (iii) implies that $V_{j+1} = \oplus_{k < j} W_k  $ and  $ L^2(\IR^d) = \oplus_{j \in \IZ} W_j$. Observe that if $\{ \tau_{\k}\psi_{\ell} \, : \,  \k \in \IZd, \ 1 \le \ell \le N\}$ is a Parseval frame  for $W_0$, then the system $\{D_A^j\tau_{{\k}}\psi_{\ell} \, : \,  j \in \IZ, \ \k \in \IZd, \ 1 \le \ell \le N\}$ is a Parseval  frame for $L^2(\IR^n)$.

\begin{defn}
Let $\{V_j\}_{j \in \IZ} \subset L^2(\IR^n)$ be an $A$-FMRA. A Parseval wavelet frame $\Psi= \{ \psi_1,\dots, \Psi_N\}$ for $L^2(\IR^n)$ is said to be associated with $\{V_j\}_{j \in \IZ}$ if $\{ \tau_{\k}\psi_{\ell} \, : \,  \k \in \IZd, \ 1 \le \ell \le N\}$ is a Parseval frame  for $W_0= V_1 \ominus V_0$. We say that a Parseval wavelet frame is an $A$-FMRA wavelet frame if  $\Psi$ is Parseval wavelet frame associated with some $A$-FMRA.
\end{defn}

A slight more flexible type of Parseval wavelet frames is the following.
\begin{defn} Let $\{V_j\}_{j \in \IZ} \subset L^2(\IR^n)$ be an $A$-FMRA. We say that $\Psi= \{ \psi_1,\dots, \Psi_N\} \subset L^2(\IR^n)$
is a Parseval wavelet frame arising from $\{V_j\}_{j \in \IZ}$ if $\Psi \subset V_1$ and the associated affine system \eqref{SWF}
is a Parseval wavelet frame for  $L^2(\IR^n)$. Sometimes it is said that  $\Psi= \{ \psi_1,\dots, \Psi_N\} \subset L^2(\IR^n)$
is an $A$-FMRA based wavelet frame if $\Psi$ is Parseval wavelet frame arising from some $A$-FMRA.
\end{defn}

A key tool in the study of wavelet frames is the Fourier transform.
Here, our convention is that  if $f\in L^1(\IR^n)$,
 \[
 \widehat{f}({\x}) := \int_{\mathbb{R}^n} f ({\t}) e^{-2\pi i{\t}\cdot {\x}} d{\t},
 \]
 where ${\t}\cdot {\x}$ denotes the usual inner product of vectors ${\t}$ and $ {\x}$ in $\IR^n$.
 The definition of Fourier transform is extended as usual form to  functions in $L^2(\IR^n)$.

The following definitions were introduced in \cite{CKS:05}.
\begin{defn} Let $A: \IR^n \to \IR^n$ be an expansive linear map. It is said  that ${\bf x}\in \IR^n$ is a point of
$A$-density for a set $E\subset\IR^n,$ $|E|_n>0$, if for any $r>0$
$$ \lim_{j\to \infty}\frac{|E\cap (A^{-j}B_r+{\bf
x})|_n}{|A^{-j}B_r|_n} =1.$$
\end{defn}
\begin{defn} Let $A: \IR^n \to \IR^n$ be an expansive linear map. Let $f: \IR^n \longrightarrow \IC$ be a measurable function.
It is said that ${\bf x}\in \IR^n$ is a point of $A$-approximate
continuity of the function $f$ if there exists $E\subset\IR^n,$
$|E|_n>0,$ such that ${\bf x}$ is a point of $A$-density for the set
$E$ and
\[
\lim_{{\scriptsize \begin{array}{ll}{\bf y} \to {\bf x} \\ {\bf
y}\in E
\end{array}} } f({\bf y}) = f({\bf x}).
\]
\end{defn}
\begin{defn} Let $A: \IR^n \to \IR^n$ be an expansive linear map.  A measurable function $f:\IR^n\rightarrow\IC$ is
said to be \emph{$A$-locally nonzero} at a point ${\bf x} \in \IR^n
$ if for any $\eps, r>0$  there exists $j\in \IN$ such that
\[ |\lb {\bf y} \in A^{-j}B_r+{\bf x} ~:~ f ({\bf y})=
0\rb|_n  < \eps | A^{-j}B_r|_n.
\]
\end{defn}

Observe that if $A=aI$, where $a >1 $ and $I$ is the identity map on $\IR^n$,
 the definition of a point of $A$-approximate continuity
coincides with the well-known definition  of \textit{approximate
continuity} (cf. \cite{N:60}, \cite{B:69}).

This paper is structured as follows. In Section \ref{Section2}, we write the main results of this paper, i.e., a
characterization of all Parseval wavelet frames can be constructed via  Oblique Extension Principle and a
characterization of all
Parseval wavelet frames  arising from a fix frame multiresolution analysis. In Section \ref{Section3},
we collect some well known results that we will use along this paper. The proofs of the main results of this paper will be postponed until Section \ref{Section4}.

\section{Main result} \label{Section2}

We write the main results of this paper.
In what follows, we fix  $A : \IR^n \to \IR^n $  an expansive linear map such that $A(\IZ^n) \subset \IZ^n$. Moreover
let us fix $\Gamma_{A^*}= \{\mathbf{p}_k \}_{k=0}^{d_A-1},$ where $\mathbf{p}_0=\mathbf{0}$,
 a full collection of representatives of the cosets of  $(A^*)^{-1}\IZ^{n}/\IZ^{n}$. In order to short the notation,
 if we write a wavelet frame we mean a wavelet frame associated to the dilation $A$.


The following result characterize all Parseval wavelet frames can be constructed via Oblique Extension Principle.
\begin{thm} \label{thm:main}
 Let $\phi \in L^2(\IR^n)$ such that
\[
\widehat{\phi}({A^*\t})=H_0({\t}) \widehat{\phi}({\t}), \quad \textrm{a.e.,}
\]
where $H_0 \in L^{\infty} (\IT^n)$. Let $H_{1}, \dots, H_N \in L^{\infty} (\IT^n)$ and define $\psi_{1},\dots, \psi_N \in L^2(\IR^n)$ by
\begin{equation} \label{psi}
\widehat{\psi_{\ell}}({A^*\t})=H_{\ell}({\t}) \widehat{\phi}({\t}) \quad \textrm{a.e.,} \qquad  \ell=1,\dots, N.
\end{equation}
Then the following are equivalent:
\begin{enumerate}
\item[i)]  The set of functions $\{ \psi_{\ell} \ : \ \ell=1,\dots, N \}$ is a Parseval wavelet frame  
for $L^2(\IR^n)$.
\item[ii)]  There exists $S$, a non-negative $\IZ^n$-periodic measurable function  such that $\sqrt{S}| \widehat{\phi} | \in L^2(\IR^n)$ and also\\
  $(a)$     the origin is a point of $A^*$-approximate continuity for $S|\widehat{\phi}|^2$, provided $S({\bf 0})|\widehat{\phi}({\bf 0})|^2=1$; \\
   $(b)$
    \begin{eqnarray} \label{SOEP0}
  S (A^*{\t}) | H_0({\t})|^2  +
	\sum_{\ell=1}^N |H_{\ell}({\t})|^2   = S({\t}) \quad a.e. \quad {\t} \in \IR^n \setminus \mathcal{N}_{\widehat{\phi}};
\end{eqnarray}\\
   $(c)$ the equality
   \begin{eqnarray} \label{SOEPk}
  S(A^*{\t}) H_0({\t}) \overline{H_0({\t} +  \mathbf{p}_k )} +
	\sum_{\ell=1}^N H_{\ell}({\t})\overline{H_{\ell}({\t}+ \mathbf{p}_k )}   = 0
\end{eqnarray}
holds for a.e. ${\t} \in \IR^n \setminus \mathcal{N}_{\widehat{\phi}}$ and for any $\mathbf{p}_k$, $k=1,\dots, d_A-1$, such that ${\t}+ \mathbf{p}_k  \in \IR^n \setminus \mathcal{N}_{\widehat{\phi}}$.
\end{enumerate}
\end{thm}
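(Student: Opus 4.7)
The plan is to prove the two implications separately, adapting the classical Oblique Extension Principle argument so that the usual pointwise continuity of $\widehat{\phi}$ at the origin is replaced by the weaker $A^*$-approximate continuity of $S|\widehat{\phi}|^2$.

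For $(ii)\Rightarrow(i)$: I would work with the affine system \eqref{SWF} in the Fourier domain and appeal to the classical two-equation characterization of Parseval wavelet frames (the $t_q$-identities for $q\in\IZ^n$). The core computation is the telescoping identity obtained by iterating \eqref{SOEP0}, which, using the refinement relation $\widehat{\phi}(A^*\t)=H_0(\t)\widehat{\phi}(\t)$, reads
\begin{equation*}
S(\t)|\widehat{\phi}(\t)|^2 \;-\; S((A^*)^N\t)|\widehat{\phi}((A^*)^N\t)|^2 \;=\; \sum_{j=1}^{N}\sum_{\ell=1}^{N}|\widehat{\psi_\ell}((A^*)^j\t)|^2,
\end{equation*}
with a completely analogous identity obtained from \eqref{SOEPk} to handle the off-diagonal $q\ne 0$ terms. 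The expansiveness of $A^*$ together with $\sqrt{S}|\widehat{\phi}|\in L^2(\IR^n)$ pushes the second term on the left to $0$ as $N\to\infty$ (along almost every $\t$), giving one half of the $t_q$-equations. The remaining half requires sending the index ``backwards'' to $j=-\infty$, and this is precisely where the $A^*$-approximate continuity of $S|\widehat{\phi}|^2$ at the origin, together with the normalization $S(\mathbf 0)|\widehat{\phi}(\mathbf 0)|^2=1$, is used: on a set $E$ of full $A^*$-density at $\mathbf 0$ one has $S((A^*)^{-J}\t)|\widehat{\phi}((A^*)^{-J}\t)|^2\to 1$, which is exactly the value needed to close the telescoping sum and recover the Parseval identity.

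For $(i)\Rightarrow(ii)$: I would \emph{define} the candidate function $S$ from the frame itself, via
\begin{equation*}
S(\t) \;:=\; \frac{1}{|\widehat{\phi}(\t)|^2}\sum_{j=1}^{\infty}\sum_{\ell=1}^{N}|\widehat{\psi_\ell}((A^*)^j\t)|^2,
\end{equation*}
using the convention $0/0=0$ agreed in the introduction, and verify each required property. Its $\IZ^n$-periodicity follows from \eqref{psi} together with the $\IZ^n$-periodicity of the $H_\ell$'s (the Fourier transforms $\widehat{\psi_\ell}((A^*)^j\t)$ combine, after factoring $\widehat{\phi}$, into an expression depending only on $\t$ through the periodic masks). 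The identities $(b)$ and $(c)$ are obtained by substituting $\widehat{\psi_\ell}(A^*\t)=H_\ell(\t)\widehat{\phi}(\t)$ into the $t_q$-equations characterizing Parseval frames and rearranging, splitting the full sum $\sum_{j\in\IZ}$ into its $j\ge 1$ and $j\le 0$ parts. Finally, $S(\mathbf 0)|\widehat{\phi}(\mathbf 0)|^2=1$ and the approximate continuity at the origin follow from the Parseval identity $\sum_{j\in\IZ}\sum_{\ell}|\widehat{\psi_\ell}((A^*)^j\t)|^2=1$ a.e.: the ``negative tail'' $\sum_{j\le 0}\sum_\ell|\widehat{\psi_\ell}((A^*)^j\t)|^2$ tends to $0$ along a set of full $A^*$-density at $\mathbf 0$, because each $\widehat{\psi_\ell}\in L^2(\IR^n)$ is $A^*$-locally zero at $\mathbf 0$ after enough dilations.

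The main obstacle is the approximate-continuity bookkeeping, which is what distinguishes this statement from the standard OEP. In both directions one is forced to identify, \emph{canonically and independently of the choice of $\phi$ and the $H_\ell$'s}, a single measurable set $E\subset\IR^n$ for which $\mathbf 0$ is a point of $A^*$-density and along which the relevant pointwise limits hold simultaneously. I would assemble this set from the Lebesgue-type density points of $|\widehat{\phi}|^2$, of $S|\widehat{\phi}|^2$, and of the tail series defining $S$, relying on the Lemmas collected in Section \ref{Section3} (and on the properties of $A^*$-density and $A^*$-local nonvanishing recalled in the introduction) to ensure that the intersection of these full-density sets is itself a set of full $A^*$-density at the origin.
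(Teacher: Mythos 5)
Your direction $(i)\Rightarrow(ii)$ is essentially the paper's argument: the paper takes for $S$ the fundamental function $\Theta(\t)=\sum_{m\ge 0}\sum_{\ell}|H_{\ell}((A^*)^{m}\t)|^2\prod_{k=0}^{m-1}|H_{0}((A^*)^{k}\t)|^2$ built from the masks truncated to vanish on $\mathcal{N}_{\widehat{\phi}}$; on $\IR^n\setminus\mathcal{N}_{\widehat{\phi}}$ this coincides with your quotient, but it is $\IZ^n$-periodic by construction, whereas your formula $|\widehat{\phi}(\t)|^{-2}\sum_{j\ge1}\sum_{\ell}|\widehat{\psi_{\ell}}((A^*)^{j}\t)|^2$ can fail periodicity at points where $\widehat{\phi}(\t)=0$ but $\widehat{\phi}(\t+\mathbf{k})\neq 0$, so you should define $S$ through the masks as you implicitly intend. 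The verification of $(b)$, $(c)$ and of the approximate continuity via the a.e.\ limit $\lim_{J\to\infty}S((A^*)^{-J}\t)|\widehat{\phi}((A^*)^{-J}\t)|^2=1$ followed by Lemma J is exactly what the paper does.

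The genuine gap is in $(ii)\Rightarrow(i)$, at precisely the step that distinguishes this theorem from the textbook OEP. After telescoping \eqref{SOEP0} you need $\lim_{J\to\infty}S((A^*)^{-J}\t)\,|\widehat{\phi}((A^*)^{-J}\t)|^2=1$ for \emph{almost every} $\t$ in order to recover the first equation of Theorem A. You justify this by saying that the function tends to $1$ on a set $E$ of full $A^*$-density at the origin; but $A^*$-approximate continuity controls the values of $S|\widehat{\phi}|^2$ on $E$, not along the orbits $\{(A^*)^{-J}\t\}_{J\ge 1}$ --- for a fixed $\t$ there is no reason why $(A^*)^{-J}\t$ should eventually lie in $E$, and intersecting several full-density sets (your proposed remedy in the last paragraph) does not touch this difficulty. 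The implication ``$A^*$-approximate continuity at $\mathbf{0}$ implies the a.e.\ orbit limit'' is false for general measurable functions; only the converse (Lemma J) holds. The claim can be rescued here, but only by exploiting extra structure: the telescoped identity gives $g(\t)\le g((A^*)^{-1}\t)$ a.e.\ for $g=S|\widehat{\phi}|^2$, so the orbit limit $L(\t)$ exists and is $A^*$-invariant, and a positive-measure invariant set where $L\neq 1$ would put a fixed proportion of each ball $(A^*)^{-J}B_R$ where $g$ stays away from $1$, contradicting the density of $E$. None of this appears in your sketch. The paper sidesteps the issue entirely: in this direction it never passes to the pointwise $t_q$-equations, but instead telescopes the quantities $\sum_{\mathbf{k}}|\langle f,D_A^{j}\tau_{\mathbf{k}}\phi\rangle|^2$ for $f$ with continuous compactly supported $\widehat{f}$, controls the $j\to+\infty$ end by an \emph{integrated} form of the density hypothesis over $(A^*)^{-j}B_R$ (where a small exceptional set contributes a small error to an integral), and kills the $j\to-\infty$ end with Proposition G. You should either adopt that route or supply the orbit-limit argument explicitly.
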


 The following result gives a characterization of all Parseval wavelet frames arising from a fix generalized
 multiresolution analysis.
 \begin{thm} \label{thm:main2}
 Let  $\{ V_j \}_{j\in \IZ} \subset L^2(\IR^n)$ be an $A$-FMRA with a scaling function $\phi $.  Let
  $\varphi \in L^2(\IR^n)$ defined by
  $\widehat{\varphi} ({\t})= \widehat{\phi} ({\t}) / [ \widehat{\phi},  \widehat{\phi} ]^{1/2} ({\t}) $.
  Let  $ \Psi = \{ \psi_{1},\dots, \psi_N \} \subset L^2(\IR^n)$. The following are equivalent.
\begin{enumerate}
\item[i)]  The set of functions $ \Psi$ is a Parseval wavelet frame arising from   $\{ V_j \}_{j\in \IZ}$.
\item[ii)]
 There exist $H_0, H_{1}, \dots, H_N \in L^{\infty} (\IT^n)$ such that \\
 $a)$
 \begin{equation*} \label{H0}
\widehat{\varphi}({A^*\t})=H_0({\t}) \widehat{\varphi}({\t}), \quad \textrm{a.e.,}
\end{equation*}
\begin{equation*} \label{Hl}
\widehat{\psi_{\ell}}({A^*\t})=H_{\ell}({\t}) \widehat{\varphi}({\t}) \quad \textrm{a.e.,} \qquad  \ell=1,\dots, N,
\end{equation*}
and \\
 $b)$ there exists  a non-negative $S\in L^{\infty}(\IT^n) $  such that  the origin is a point of $A^*$-approximate
 continuity for $S$ if we set  $S({\bf 0})=1$, and the equalities in \eqref{SOEP0} and \eqref{SOEPk} are satisfied.
\end{enumerate}
\end{thm}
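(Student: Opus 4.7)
The plan is to reduce Theorem~\ref{thm:main2} to Theorem~\ref{thm:main} applied to the normalized function $\varphi$ in place of $\phi$. The preliminary observation is that the normalization $\widehat{\varphi} = \widehat{\phi}/[\widehat{\phi},\widehat{\phi}]^{1/2}$ forces $[\widehat{\varphi},\widehat{\varphi}]$ to be the indicator of a measurable set (using the $\IZ^n$-periodicity of $[\widehat{\phi},\widehat{\phi}]$ and the convention $0/0=0$), so that $\{\tau_\k \varphi\}_{\k \in \IZ^n}$ is a Parseval frame for $V_0$ and, consequently, $\{D_A\tau_\k\varphi\}_{\k\in\IZ^n}$ is a Parseval frame for $V_1$. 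Moreover, combining this with the FMRA refinement of $\phi$ produces the refinement $\widehat{\varphi}(A^*\t) = H_0(\t)\widehat{\varphi}(\t)$ with $H_0 \in L^{\infty}(\IT^n)$. In parallel, the scaling-function characterization from~\cite{KS:08} (recalled as a lemma in Section~\ref{Section3}) guarantees that the origin is a point of $A^*$-approximate continuity for $|\widehat{\varphi}|^2$ with $|\widehat{\varphi}(\mathbf{0})|^2 = 1$; this furnishes the bridge between the ``approximate continuity of $S|\widehat{\phi}|^2$'' appearing in Theorem~\ref{thm:main}(ii)(a) and the cleaner ``approximate continuity of $S$'' appearing in Theorem~\ref{thm:main2}(ii)(b).

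For (ii) $\Rightarrow$ (i), each $H_\ell \in L^{\infty}(\IT^n) \subset L^2(\IT^n)$ has $\ell^2$ Fourier coefficients, so \eqref{psi} exhibits $\psi_\ell$ as an $L^2$-convergent combination of $\{D_A \tau_\k \varphi\}_{\k \in \IZ^n}$ and hence $\Psi \subset V_1$; the identities \eqref{SOEP0} and \eqref{SOEPk}, together with the approximate-continuity bridge above, match the hypotheses (ii)(a)--(c) of Theorem~\ref{thm:main} applied to $\varphi$, which delivers that $\Psi$ is a Parseval wavelet frame, i.e.\ that $\Psi$ arises from $\{V_j\}_{j \in \IZ}$. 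For the converse (i) $\Rightarrow$ (ii), the inclusion $\Psi \subset V_1 = \overline{\mathrm{span}}\{D_A \tau_\k \varphi\}$ expands each $\psi_\ell$ as an $\ell^2$-combination of $\{D_A \tau_\k \varphi\}_{\k \in \IZ^n}$, defining $H_\ell \in L^2(\IT^n)$ with $\widehat{\psi_\ell}(A^*\t) = H_\ell(\t)\widehat{\varphi}(\t)$ a.e. The main task is then to simultaneously upgrade the $H_\ell$'s to $L^{\infty}(\IT^n)$ and produce the periodic function $S$: using the characterising equations that any Parseval wavelet frame must satisfy, together with the refinement equations for $\varphi$ and the $\psi_\ell$'s, I construct an essentially bounded non-negative $S$ satisfying \eqref{SOEP0} and \eqref{SOEPk}, and then read off the $L^{\infty}$ bound on $H_1,\dots,H_N$ from \eqref{SOEP0} together with the essential boundedness of $H_0$. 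Finally, the approximate-continuity bridge transfers the resulting condition at the origin on $S|\widehat{\varphi}|^2$ back to $S$, yielding $S(\mathbf{0}) = 1$ and the required approximate continuity.

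The principal obstacle is precisely this simultaneous upgrade-and-construction in the forward direction, since \emph{a priori} only the $\ell^2$ expansion of $\psi_\ell$ in $\{D_A \tau_\k \varphi\}_{\k \in \IZ^n}$ is available and there is no control on $S$ before it has been built. A closely related subtle point is the careful passage at the origin between $|\widehat{\varphi}|^2$ and $1$, which is exactly where the hypothesis that $\phi$ is an FMRA scaling function (as opposed to a generic refinable function) is essential, through the characterization of scaling functions in~\cite{KS:08}.
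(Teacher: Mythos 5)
Your proposal follows essentially the same route as the paper: normalize to $\varphi$ so that $[\widehat{\varphi},\widehat{\varphi}]$ is an indicator (Lemma E), get the refinement masks and the inclusion $\Psi\subset V_1$ from Lemmas D and F, build $S$ in the forward direction from the characterizing equations of a Parseval wavelet frame (the paper does this explicitly via the fundamental function $\Theta$ of \eqref{fundafun}, whose essential boundedness comes from the quasi-affine frame bound combined with $[\widehat{\varphi},\widehat{\varphi}]=\chi$), and use Theorem H to pass between approximate continuity of $S|\widehat{\varphi}|^2$ and of $S$ at the origin. The steps you flag as the principal obstacles are exactly the ones the paper resolves, and your outline of how to resolve them matches the paper's argument.
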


A consequence of Thorem \ref{thm:main2} is the following characterization of all Parseval wavelet frames
associated with a fix frame multiresolution analysis.
 \begin{cor} \label{cor:main2}
 Let  $\{ V_j \}_{j\in \IZ} \subset L^2(\IR^n)$ be an $A$-FMRA with a scaling function $\phi $.  Let
  $\varphi \in L^2(\IR^n)$ defined by
  $\widehat{\varphi} ({\t})= \widehat{\phi} ({\t}) / [ \widehat{\phi},  \widehat{\phi} ]^{1/2} ({\t}) $.
  Let  $ \Psi = \{ \psi_{1},\dots, \psi_N \} \subset L^2(\IR^n)$. The following are equivalent.
\begin{enumerate}
\item[$\alpha$)]  The set of functions $ \Psi$ is a Parseval wavelet frame associated with   $\{ V_j \}_{j\in \IZ}$.
\item[$\beta$)] The condition ii) in Theorem  \ref{thm:main2} holds and also
  \[
  \sum_{k =0}^{d_A-1} H_0({\t}+  \mathbf{p}_k ) \overline{H_{\ell}({\t} +  \mathbf{p}_k )}  = 0, \quad \textrm{for a.e. } \,  {\t} \in \IR^n \setminus \mathcal{N}_{\widehat{\phi}}, \quad \ell= \{ 1, 2, \dots, N \}.
 \]
 \end{enumerate}
 \end{cor}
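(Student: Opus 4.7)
The plan is to reduce the corollary to Theorem~\ref{thm:main2} together with the extra orthogonality $\psi_\ell\perp V_0$ for each $\ell$. By the two definitions, $\Psi$ is a Parseval wavelet frame \emph{associated with} $\{V_j\}_{j\in\IZ}$ precisely when $\{\tau_{\k}\psi_\ell:\k\in\IZ^n,\,1\le\ell\le N\}$ is a Parseval frame for $W_0=V_1\ominus V_0$, whereas $\Psi$ merely \emph{arising from} $\{V_j\}_{j\in\IZ}$ only requires $\Psi\subset V_1$ together with the affine Parseval frame property on $L^2(\IR^n)$. Thus the gap between $\beta)$ and condition~ii) of Theorem~\ref{thm:main2} should encode exactly the assertion $\psi_\ell\in W_0$.

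For $\beta)\Rightarrow\alpha)$, Theorem~\ref{thm:main2} immediately provides $\Psi\subset V_1$ and the affine Parseval frame property. I would then verify $\psi_\ell\perp V_0$ by showing $[\widehat{\varphi},\widehat{\psi_\ell}]\equiv 0$ a.e.\ on $\IR^n\setminus\mathcal{N}_{\widehat{\phi}}$. Writing $\m\in\IZ^n$ as $\m=A^*(\mathbf{p}_k+\m')$ with $\mathbf{p}_k\in\Gamma_{A^*}$ and $\m'\in\IZ^n$, applying the refinement relations $\widehat{\varphi}(A^*\t)=H_0(\t)\widehat{\varphi}(\t)$, $\widehat{\psi_\ell}(A^*\t)=H_\ell(\t)\widehat{\varphi}(\t)$, and invoking $\IZ^n$-periodicity of $H_0,H_\ell$, I obtain
\[
[\widehat{\varphi},\widehat{\psi_\ell}](A^*\t)=\sum_{k=0}^{d_A-1}H_0(\t+\mathbf{p}_k)\overline{H_\ell(\t+\mathbf{p}_k)}\,[\widehat{\varphi},\widehat{\varphi}](\t+\mathbf{p}_k).
\]
The normalization $\widehat{\varphi}=\widehat{\phi}/[\widehat{\phi},\widehat{\phi}]^{1/2}$ yields $[\widehat{\varphi},\widehat{\varphi}]=\chi_{\IR^n\setminus\mathcal{N}_{\widehat{\phi}}}$ a.e., so the coset identity in $\beta)$ is exactly the vanishing of the left-hand side. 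Hence each $\psi_\ell\in W_0$. Then, since $D_A W_j=W_{j+1}$ and $L^2(\IR^n)=\bigoplus_{j\in\IZ}W_j$ orthogonally, for $h\in W_0$ the inner products $\langle h, D_A^j\tau_{\k}\psi_\ell\rangle$ vanish for all $j\ne 0$, so that applying the $L^2(\IR^n)$-Parseval identity to such $h$ yields the Parseval identity for $W_0$.

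For the converse $\alpha)\Rightarrow\beta)$, the orthogonal decomposition of $L^2(\IR^n)$ together with $D_A^j W_0=W_j$ lets me lift the Parseval frame for $W_0$ to a Parseval frame for $L^2(\IR^n)$: for $h=\sum_j h_j\in\bigoplus_j W_j$, unitarity of $D_A$ and the frame property for $W_0$ give $\sum_{j,\k,\ell}|\langle h, D_A^j\tau_\k\psi_\ell\rangle|^2=\sum_j\|h_j\|^2=\|h\|^2$. Hence $\Psi$ arises from $\{V_j\}$ and Theorem~\ref{thm:main2} supplies ii), while the hypothesis $\psi_\ell\in W_0$ (in particular $\psi_\ell\perp V_0$) yields the coset identity via the Fourier computation above, run in reverse. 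The main obstacle is the careful bookkeeping around $\mathcal{N}_{\widehat{\phi}}$: one must check that $[\widehat{\varphi},\widehat{\varphi}]=1$ a.e.\ on $\IR^n\setminus\mathcal{N}_{\widehat{\phi}}$ under the $0/0=0$ convention adopted in the paper, that the refinement relations are invoked only where they are valid, and that the coset splitting produces no spurious contributions near this exceptional set.
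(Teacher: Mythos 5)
Your proposal is correct and follows essentially the same route as the paper: the paper's one-line proof combines Theorem \ref{thm:main2}, the definition of $W_j$, and Lemma \ref{lem:ort}, whose content (orthogonality of $\psi_\ell$ to $V_0$ is equivalent to the coset identity) is exactly what you establish via $[\widehat{\psi_\ell},\widehat{\varphi}]=0$ instead of the paper's direct computation of $\langle \psi_\ell,\tau_{\k}\varphi\rangle$ using Lemma K --- the same calculation in different notation. Your explicit verification that a Parseval frame for $W_0$ transfers to the affine Parseval frame for $L^2(\IR^n)$ and back via the orthogonal decomposition $L^2(\IR^n)=\bigoplus_{j}W_j$ is the step the paper leaves implicit in its introductory remarks, and the bookkeeping caveats you flag (the convention $H_\ell=0$ on $\mathcal{N}_{\widehat{\varphi}}$ and $[\widehat{\varphi},\widehat{\varphi}]=\chi_{\IR^n\setminus\mathcal{N}_{\widehat{\phi}}}$) are handled the same way in the paper.
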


\section{Background} \label{Section3}

We collect  well known results on tight wavelet frames and scaling functions we will use to prove our main results in this paper.

We need the following characterization of Parseval wavelet frame for  $L^2(\IR^n)$ proved in \cite{B:00}.  Other versions appeared in
 \cite{FGWW:97}, \cite{RS1} and \cite{H:97}. \\
{\bf Theorem A.} {\em Suppose $\Psi= \{ \psi_{\ell} \ : \ \ell=1,\dots, N \} \subset L^2(\IR^n) $.
The following are equivalent:
\begin{enumerate}
\item[I)] The set of functions $\Psi$ is a Parseval wavelet frame for $L^2(\IR^n)$.
\item[II)]
$$
\sum_{\ell=1}^N \sum_{j \in \IZ} | \widehat{\psi_{\ell}} (A^{*j}{\t})|^2=1, \qquad a.e. \quad \textrm{ and}
$$
$$
\sum_{\ell=1}^N \sum_{j = 0}^{\infty} \widehat{\psi_{\ell}} (A^{*j}{\t})
 \overline{\widehat{\psi_{\ell}} (A^{*j}({\t} + \mathbf{q}))} =0,   \qquad a.e., \quad  \mathbf{q} \in \IZ^n \setminus A^*\IZ^n.
$$
\end{enumerate}}

The following is Theorem 2 (ii) in \cite{CSS:98}.\\
{\bf Theorem B.} {\em Let $\Psi$ be a finite set of functions in $L^2(\mathbb{R}^n)$. Then $X_{\Psi}$ is an affine frame if and only if $X^q_{\Psi}$ is a quasi-affine frame. Furthermore, their lower and upper exact frame bounds are equal.} \\

In  \cite{BW:94},  \cite{BL:98}, \cite{KL} and  \cite{CCK:01}
(see also \cite{Ch:03}), different versions of
a characterization of the functions in
$L^2(\mathbb{R} )$ whose integer translates generate a frame
sequence were given. Here we only write a version on $L^2(\mathbb{R}^n)$  because the proof
is completely similar to the case
$L^2(\mathbb{R})$.\\
{\bf Theorem C.}
{\em Let  $\phi\in L^2(\mathbb{R}^n)$. The system
$\{\tau_{\k} \phi  ~:~ \mathbf{k}\in \mathbb{Z}^{n}\} $ is a frame sequence with frame
bounds $C$ and $D$ if and only if
\begin{equation} \label{frameconstants}
C \leq [\widehat{\phi}, \widehat{\phi}] (\t) \leq D \quad \textrm{a.e. on $\IT^n
\setminus \mathcal{N}_{\widehat{\phi}}$.}
\end{equation}}

The following two results  can be found in  \cite{BVR:94} and \cite{BDR:93}.\\
{\bf Lemma D.} {\em Let $\phi \in L^2(\IR^n)$ and $V_0=\overline{\textrm{span}} \{ \tau_{\k}\phi ~:~ {\k}\in \IZ^n\}$ and
let $V_j:=D_A^jV_0$, $j\in \IZ$. A function $f\in L^2(\IR^n)$ is in
$V_j$ if and only if there exists $H$, an $\IZ^n$-periodic measurable function, such that
$\widehat{f}(A^{*j}{\t})= H(\t) \widehat{\phi}({\t})$ a.e. }\\
{\bf Lemma E.} {\em  Let  $\phi\in L^2(\IR^n)$ and  $V =
{\overline{\mbox{span}}} \{\tau_{\k}\phi ~:~ {\k}\in
\IZ^{n}\}. $ Then the system $\{\tau_{\k}\varphi  ~:~
{\k}\in \IZ^{n}\}$ is a Parseval frame for $V,$ where $\varphi \in
L^2(\IR^n)$ is the function defined by $\widehat{\varphi}=
\widehat{\phi} / [\widehat{\phi},\widehat{\phi}]^{\frac{1}{2}}$.}\\

Different versions of   the following lemma appeared in various publications
(cf. \cite{BDR:93}, \cite{BL:98}, \cite{Ch:03},  \cite{KS:08}). \\
{\bf Lemma F.}
{\em Let $\phi\in L^2(\mathbb{R}^n)$ and assume that
$\{\tau_{\k}\phi  ~:~ \mathbf{k}\in\mathbb{Z}^n \}$ is a frame
sequence in $L^2(\mathbb{R}^n).$
 If the subspaces $V_j,
j\in\mathbb{Z}$, are defined by ${(\ref{c:1})}$ then the
following conditions are equivalent:
\begin{enumerate}
\item[a)]  \qquad $\forall j\in\mathbb{Z},\qquad V_j\subset V_{j+1}$;
\item[b)] \qquad $V_0\subset V_{1}$;
\item[c)]  There exists a function $H \in L^\infty(\mathbb{T}^n)$ such that
  \begin{equation*}\label{cb:2}
\widehat{{\phi}}(A^*\mathbf{t})= H(
\mathbf{t})\widehat{\phi}(\mathbf{t}) \quad \mbox{a.e. on}\quad
\mathbb{R}^n.
\end{equation*}
\end{enumerate}}

The following proposition appeared in \cite{H:12} (cf. \cite{JS:94}, \cite{D:92}, \cite{BVR:94}).\\
{\bf Proposition G.}
{\em Let $\phi\in L^2(\mathbb{R}^d)$.
Then for each $f \in L^2(\IR^d)$ we have \\
$\lim_{j \to -\infty}  \sum_{{\k}\in \IZ^n } |\langle f, D_{A}^{j} \tau_{\k}\phi \rangle|^2 =0$. }\\

In a more general context, the following theorem was proved in \cite{KS:08}. That theorem is
formulated here in a modified form in order to indicate the essential result we need in this paper.  \\
{\bf Theorem H.}
{\em Let $V_j$, $j \in \IZ$, be a sequence of closed subspaces in $L^2(\IR^n)$ satisfying the conditions}
(i), (ii) {\em and} (iv) {\em with scaling function $\phi$}.  Then the following conditions are
equivalent:
\begin{enumerate}
\item[({\bf A})]  $\overline{\bigcup_{j\in \IZ}V_j}= L^2(\IR^n)$
\item[({\bf B})]
The function ${\widehat{\phi}}$  is $A^*$-locally nonzero at the origin;
\item[({\bf C})]   The origin is a
point of $A^*$-approximate continuity of the function
$|{\widehat{\phi}}|^2 / [\widehat{\phi},\widehat{\phi}], $
provided that $|{\widehat{\phi}({\bf 0})}|^2 / [\widehat{\phi},\widehat{\phi}]({\bf 0}) = 1$.
\end{enumerate}

Note that Lemma F and Theorem H together characterize all the functions $\phi \in L^2(\IR^n)$ that generate an $A$-FMRA.

The following proposition is a slight different version of Proposition 6 in \cite{SA:09}. \\
{\bf Proposition I.} {\em
Let $H_0 \in L^{\infty} (\IT^n)$ such that $|H_0 ({\bf t}) | \leq 1$ a.e.. Let $\phi \in L^2(\IR^n)$ such that
 the origin is a point of $A^*$-approximate continuity of  $| \widehat{\phi}|$, provided $| \widehat{\phi}({\bf 0})|=1$, and
\[
\widehat{\phi}({A^*\t})=H_0({\t}) \widehat{\phi}({\t}), \quad \textrm{a.e. ${\bf t} \in \IR^n$.}
\]
Let $\widetilde{H_0}({\t})= H_0({\t})$ if ${\t} \in \mathbb{R}^n \setminus \mathcal{N}_{\widehat{\phi}}$
and  $\widetilde{H_0}({\t})= 0$ if ${\t} \in  \mathcal{N}_{\widehat{\phi}}$.
Then
\[
| \widehat{\phi}(\mathbf{t}) | = \prod_{j=1}^{\infty} |
\widetilde{H_0}((A^{\ast})^{-j}\mathbf{t}) |, \quad \textrm{ a.e. ${\bf t} \in \IR^n$.}
\]}

We need the following auxiliary result on points of approximate continuity proved in \cite{SA:09}.\\
{\bf Lemma J.} {\em
Let  $A: \IR^n\rightarrow \IR^n$ be an expansive linear  map. Let $f:\IR^n \longrightarrow \IC$ be a measurable function
such that  for a point $\mathbf{y}\in \IR^n$ we have
\[
\lim_{j\longrightarrow \infty} f(A^{-j}\mathbf{x} +
\mathbf{y})=f(\mathbf{y}) \qquad \mbox{a.e. on $\IR^n$,}
\]
then the point $\mathbf{y}$ is a point of $A$-approximate continuity
of $f$.}\\

 The following technical
result is proved in  \cite{CKS:05}. Note that the equality (ii) in
the following lemma does not appear in the original result but it is
an immediate consequence of the proof of  (i).\\
{\bf Lemma K.} {\em Let $g\in L^2(\IT^n)$, let $A: \IR^n\rightarrow
\IR^n$ be a
 fixed linear invertible map  such that
 $A(\IZ^n) \subset \IZ^n$ and let $\hat{A}: \IT^n\rightarrow \IT^n$ be the induced
endomorphism. Let $\Gamma_A= \{\mathbf{q}_i \}_{i=0}^{d_A-1}$  be a full
collection of representatives of the cosets of  $A^{-1}(\IZ^{n})/\IZ^{n}$. Then
\begin{enumerate} \item[(i)]
$\int_{\IT^n} g(\hat{A}{\t}) d{\t} = \int_{\IT^n} g({\t}) d{\t},$
\item[(ii)]
$\int_{[0,1]^n} g({\t}) d{\t} = d_A^{-1}\int_{[0,1]^n}
\sum_{i=0}^{d_A-1}g(A^{-1}{\t} + \mathbf{q}_i) d{\t}.$
\end{enumerate}}

\section{Proofs of the main results} \label{Section4}

\subsection{Proof of Theorem 1.}

To prove  $ii)$ implies $i)$ in Theorem \ref{thm:main} we need the following results.

\begin{lem} \label{lem:02}  Let $\phi \in L^2(\IR^n)$ such that
$| \widehat{\phi}({\bf t})| \leq 1$ a.e. and the origin is a point of $A^*$-approximate continuity of
$| \widehat{\phi}|$, provided $| \widehat{\phi}({\bf 0})|=1$.  Let $f \in L^2(\IR^n)$ such that $\widehat{f}$ is continuous and  compactly supported.
Then, for any $ \varepsilon >0$ there exists $J \in \IN$ such that
\begin{equation} \label{eq:10}
(1- \varepsilon) \| f \|_2^2 \leq  \sum_{{\bf k} \in \IZ^n} | \langle  f, D_A^j \tau_{\bf k} \phi
\rangle   |^2  \leq  \| f \|_2^2, \qquad \forall \, j\geq J.
\end{equation}
\end{lem}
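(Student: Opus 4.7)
The plan is to use Plancherel together with a $\IZ^n$-periodization to rewrite the sum $\sum_{\k}|\langle f,D_A^j\tau_{\k}\phi\rangle|^2$ as an $L^2(\IT^n)$-norm, and then exploit the compact support of $\widehat f$ together with the expansiveness of $A^*$ to collapse that norm, for all sufficiently large $j$, into a single integral over $\IR^n$. Concretely, from
\[
\widehat{D_A^j\tau_{\k}\phi}(\xi)=d_A^{-j/2}e^{-2\pi i\k\cdot (A^*)^{-j}\xi}\,\widehat\phi((A^*)^{-j}\xi)
\]
and the substitution $\eta=(A^*)^{-j}\xi$, I would rewrite
\[
\langle f,D_A^j\tau_{\k}\phi\rangle=\int_{\IR^n} d_A^{j/2}\widehat f((A^*)^j\eta)\,\overline{\widehat\phi(\eta)}\,e^{2\pi i\k\cdot\eta}\,d\eta,
\]
which is the $(-\k)$-th Fourier coefficient on $\IT^n$ of the periodization
$\Phi_j(\eta):=d_A^{j/2}\sum_{\m\in\IZ^n}\widehat f((A^*)^j(\eta+\m))\overline{\widehat\phi(\eta+\m)}$. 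By Parseval, $\sum_{\k}|\langle f,D_A^j\tau_{\k}\phi\rangle|^2=\|\Phi_j\|_{L^2(\IT^n)}^2$.

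Next, fix $R>0$ with $\textrm{supp}(\widehat f)\subset B_R$. Since $A^*$ is expansive, there is $J_1\in\IN$ such that $(A^*)^{-j}B_R\subset B_\rho$ for some fixed $\rho<1/2$ whenever $j\geq J_1$; for such $j$, on the fundamental domain $[-1/2,1/2)^n$ only the term $\m=\mathbf 0$ contributes to $\Phi_j$ (any nonzero $\m$ forces $|\eta+\m|\geq 1/2>\rho$), and the change of variable $\xi=(A^*)^j\eta$ yields
\[
\sum_{\k}|\langle f,D_A^j\tau_{\k}\phi\rangle|^2=\int_{B_R}|\widehat f(\xi)|^2\,|\widehat\phi((A^*)^{-j}\xi)|^2\,d\xi,\qquad j\geq J_1.
\]
The upper bound in \eqref{eq:10} then follows immediately from $|\widehat\phi|\leq 1$ and Plancherel, $\int_{B_R}|\widehat f|^2=\|f\|_2^2$.

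The lower bound is the main obstacle and amounts to showing that $\int_{B_R}|\widehat f(\xi)|^2(1-|\widehat\phi((A^*)^{-j}\xi)|^2)\,d\xi$ can be made smaller than $\varepsilon\|f\|_2^2$ for $j$ large. I would pick a set $E\subset\IR^n$ witnessing the $A^*$-approximate continuity of $|\widehat\phi|$ at the origin, so that $|E|_n>0$, $\mathbf 0$ is an $A^*$-density point of $E$, and $|\widehat\phi(\eta)|\to 1$ as $\eta\to\mathbf 0$ with $\eta\in E$; then split $B_R$ into $E_j:=\{\xi\in B_R:(A^*)^{-j}\xi\in E\}$ and its complement. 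On $E_j$, once $j$ is large enough that $(A^*)^{-j}B_R$ is small and $|\widehat\phi|\geq 1-\delta$ on $E\cap (A^*)^{-j}B_R$, the integrand is uniformly dominated by $2\delta|\widehat f|^2$. On $B_R\setminus E_j$, the substitution $\eta=(A^*)^{-j}\xi$ identifies
\[
\frac{|B_R\setminus E_j|_n}{|B_R|_n}=\frac{|E^c\cap (A^*)^{-j}B_R|_n}{|(A^*)^{-j}B_R|_n},
\]
which tends to $0$ by the $A^*$-density of $E$ at the origin, so the boundedness of $|\widehat f|^2$ makes that contribution negligible. Choosing $\delta$ small and $J\geq J_1$ large enough so that both pieces are controlled yields \eqref{eq:10}. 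The crux is the handling of the ``bad set'' $B_R\setminus E_j$ via the $A^*$-density property, which is precisely where \emph{approximate}, rather than honest, continuity of $|\widehat\phi|$ at the origin enters.
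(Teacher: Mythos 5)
Your proposal is correct and follows essentially the same route as the paper: Plancherel plus periodization to identify the sum with $\int_{(A^*)^{-j}B_R}|D_{A^{*j}}\widehat f|^2|\widehat\phi|^2$, the bound $|\widehat\phi|\leq 1$ for the upper inequality, and a split of the domain into a part where $|\widehat\phi|$ is close to $1$ and a part of vanishing relative measure for the lower inequality. The only cosmetic difference is that you work directly with the witnessing set $E$ from the definition of $A^*$-approximate continuity, whereas the paper packages the same dichotomy via the sublevel set $\Lambda_{\varepsilon}=\{|\widehat\phi|\leq 1-\varepsilon/2\}$.
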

\begin{proof} Let  $f \in L^2(\IR^n)$ such that $\widehat{f}$ is continuous and
supp$(\widehat{f}) \subset B_R$ for a fix $R>0$. Let $K>0$ such that $|\widehat{f}({\bf t})| \leq K$
for every ${\bf t} \in \IR^n$. By Parseval's formula,
\begin{eqnarray} \label{sum}
\sum_{{\bf k} \in \IZ^n} | \langle  f, D_A^j \tau_{\bf k} \phi  \rangle   |^2 &= & \sum_{{\bf k} \in \IZ^n}
| \langle  \widehat{f}, \widehat{D_A^j \tau_{\bf k} \phi}  \rangle   |^2 = \sum_{{\bf k} \in \IZ^n}
| \langle D_{A^*}^j \widehat{f}, \widehat{ \tau_{\bf k} \phi}  \rangle   |^2 \nonumber \\
 &=&  \sum_{{\k} \in \IZd} |\int_{(A^*)^{-j}B_R} D_{A^*}^j\widehat{f}({\t}) \overline{\widehat{\phi}({\t})}
 e^{2 \pi {\k} \cdot {\t}} \,  d{\t} |^2.
\end{eqnarray}
Since $A^*$ is expansive, there exists $j_0 \in \mathbb{N}$ such that if $j \geq j_0$, then
$(A^*)^{-j}B_R \subset [-1/2,1/2]^d$. For those $j$, the
 sum over ${\k} \in \IZ^n$ in \eqref{sum} may be interpreted as the sum  of the square of the modulus of the $-{\k}$-th Fourier
coefficients of the function $D_{A^{*j}}\widehat{f}({\t}) \overline{\widehat{\phi}({\t})}$. Thus
\begin{eqnarray} \label{sum11}
\sum_{{\bf k} \in \IZ^n} | \langle  f, D_A^j \tau_{\bf k} \phi  \rangle   |^2 &= &  \int_{(A^*)^{-j}B_R}
|D_{A^{*j}} \widehat{f}({\t})|^2 | \widehat{\phi}({\t})|^2 \, d{\t}  \qquad \forall \ j  \geq j_0.
\end{eqnarray}
Since $| \widehat{\phi}({\t}) |\leq 1$ a.e., the right inequality of \eqref{eq:10} follows. Now, we prove the left
inequality of \eqref{eq:10}.

Let $0 < \varepsilon  < 1$ 
and take the set $\Lambda_{\varepsilon}= \{ \mathbf{t} \in \IR^n : |\widehat{\phi}({\bf t})| \leq 1- \frac{\varepsilon}{2} \}$.
Since $| \widehat{\phi}({\bf 0})|=1$, $| \widehat{\phi}({\bf t})| \leq 1$ a.e. and the origin is a point of
 $A^*$-approximate continuity of  $| \widehat{\phi}|$, then
$$
\lim_{j\to \infty} \frac{ |( (A^*)^{-j}B_R ) \cap \Lambda^c_{\varepsilon}|_n}{ |(A^*)^{-j}B_R |_n}=1.
$$
This implies that there exists $J \geq j_0 $ such that if $j \geq J$, we have
$$
| (A^*)^{-j}(B_R  \cap A^{*j}\Lambda_{\varepsilon})|_n  = |( (A^*)^{-j}B_R ) \cap \Lambda_{\varepsilon}|_n <
\dfrac{\varepsilon}{2 K^2 d_A^{j}} \| f \|_2^2.
$$
Thus, if $j \geq J$
\begin{equation} \label{approxiconti}
| B_R  \cap A^{*j}\Lambda_{\varepsilon}|_n    < \dfrac{\varepsilon}{2 K^2} \| f \|_2^2.
\end{equation}
According to \eqref{sum11}, if $j \geq J$ we obtain
\begin{eqnarray*}
\sum_{{\bf k} \in \IZ^n} | \langle  f, D_A^j \tau_{\bf k} \phi  \rangle   |^2 & \geq &
(1- \frac{\varepsilon}{2})  \int_{(A^*)^{-j}B_R \cap \Lambda^c_{\varepsilon}} |D_{A^*}\widehat{f}({\t})  |^2 \,
d{\t} \\ &=& (1- \frac{\varepsilon}{2}) \| f \|_2^2 - (1- \frac{\varepsilon}{2})  \int_{(A^*)^{-j}B_R \cap
\Lambda_{\varepsilon}} |D_{A^*} \widehat{f}({\t})  |^2 \, d{\t} \\ & \geq & (1- \frac{\varepsilon}{2}) \| f \|_2^2
-  \int_{B_R \cap A^{*j}\Lambda_{\varepsilon}} |\widehat{f}({\t})  |^2 \, d{\t}.
\end{eqnarray*}
Furthermore, by the inequality \eqref{approxiconti}, if $j \geq J$ we have
\begin{eqnarray*}
\sum_{{\bf k} \in \IZ^n} | \langle  f, D_A^j \tau_{\bf k} \phi  \rangle   |^2 & \geq & (1- \frac{\varepsilon}{2})
\| f \|_2^2 -   K^2 | B_R  \cap A^{*j}\Lambda_{\varepsilon}|_n \\ & \geq &  (1- \frac{\varepsilon}{2}) \| f \|_2^2
-   K^2 \dfrac{\varepsilon}{2 K^2} \| f \|_2^2 = (1- {\varepsilon}) \| f \|_2^2.
\end{eqnarray*}
This finishes the proof.
\end{proof}

We need the following
\begin{lem} \label{lem:03}
Let $\phi \in L^2(\IR^n)$ such that   $| \widehat{\phi}({\bf t})| \leq 1$ a.e. and
\[
\widehat{\phi}({A^*\t})=H_0({\t}) \widehat{\phi}({\t}), \quad \textrm{a.e.,}
\]
where $H_0 \in L^{\infty} (\IT^n)$. Let $H_{1}, \dots, H_N \in L^{\infty} (\IT^n)$ and define $\psi_{1},\dots, \psi_N \in L^2(\IR^n)$ by \eqref{psi}
Assume that
    \begin{eqnarray} \label{UEP0}
  | H_0({\t})|^2  +
	\sum_{\ell=1}^N |H_{\ell}({\t})|^2   = 1 \quad a.e. \, {\t} \in \IR^n \setminus \mathcal{N}_{\widehat{\phi}};
\end{eqnarray}
and
   the equality \begin{eqnarray} \label{UEPk}
 H_0({\t}) \overline{H_0({\t} +  \mathbf{p}_k )} +
	\sum_{\ell=1}^N H_{\ell}({\t})\overline{H_{\ell}({\t}+ \mathbf{p}_k )}   = 0
\end{eqnarray}
holds for a.e. ${\t} \in \IR^n \setminus \mathcal{N}_{\widehat{\phi}}$ and for any $\mathbf{p}_k$, $k=1,\dots, d_A-1$,
such that ${\t}+ \mathbf{p}_k  \in \IR^n \setminus \mathcal{N}_{\widehat{\phi}}$.
Then, for all $j \in \IZ$ and for all $f \in L^2(\IR^n)$ such that $\widehat{f}$ is continuous and  compactly supported, we have
\begin{equation} \label{sumDt}
\sum_{\k \in \IZ^n} | \langle f, D_A^j \tau_{\k}\phi \rangle|^2 =
\sum_{\k \in \IZ^n} | \langle f, D_A^{j-1} \tau_{\k}\phi \rangle|^2 + \sum_{\ell=1}^N \sum_{\k \in \IZ^n} | \langle f, D_A^{j-1}
\tau_{\k}\psi_{\ell} \rangle|^2.
\end{equation}
\end{lem}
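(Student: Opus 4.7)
My approach is to pass to the Fourier side, express each of the three squared-inner-product sums in \eqref{sumDt} as an $L^2(\IT^n)$-integral of a suitable periodization, and then use the refinement equations together with the UEP identities \eqref{UEP0}--\eqref{UEPk} to equate the two sides. A preliminary reduction: since $D_A$ is unitary, writing $\langle f,D_A^j\tau_{\k} g\rangle=\langle D_A^{-(j-1)}f,D_A\tau_{\k}g\rangle$ and noting that $\widehat{D_A^{-(j-1)}f}(\t)=d_A^{(j-1)/2}\widehat f((A^*)^{j-1}\t)$ is again continuous and compactly supported, it suffices to prove \eqref{sumDt} for $j=1$.

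For the Fourier-side rewriting, because $\widehat f$ is continuous with compact support, both $\widehat f\,\overline{\widehat g}$ and $\widehat f(A^*\,\cdot)\,\overline{\widehat\phi}$ lie in $L^1(\IR^n)$ by Cauchy--Schwarz, so their $\IZ^n$-periodizations are well-defined $L^2(\IT^n)$ functions. Recognizing $\langle f,\tau_{\k}g\rangle$ as the $(-\k)$-th Fourier coefficient of the periodization of $\widehat f\,\overline{\widehat g}$, and $\langle f,D_A\tau_{\k}\phi\rangle$ as $d_A^{1/2}$ times the $(-\k)$-th Fourier coefficient of the periodization of $\widehat f(A^*\,\cdot)\,\overline{\widehat\phi}$ (after the change of variable $\t=A^*\s$), Parseval for Fourier series yields
\[
\sum_{\k}|\langle f,\tau_{\k}g\rangle|^2=\int_{\IT^n}|w_g(\s)|^2\,d\s,\qquad w_g(\s):=\sum_{\m\in\IZ^n}\widehat f(\s+\m)\overline{\widehat g(\s+\m)},
\]
for $g\in\{\phi,\psi_1,\dots,\psi_N\}$, and
\[
\sum_{\k}|\langle f,D_A\tau_{\k}\phi\rangle|^2=d_A\int_{\IT^n}|P(\s)|^2\,d\s,\qquad P(\s):=\sum_{\m\in\IZ^n}\widehat f(A^*(\s+\m))\overline{\widehat\phi(\s+\m)}.
\]

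The heart of the argument combines the refinement equations with the coset decomposition $\IZ^n=\bigsqcup_{r=0}^{d_A-1}(\mathbf{q}_r+A^*\IZ^n)$, where $\mathbf{p}_r=(A^*)^{-1}\mathbf{q}_r\in\Gamma_{A^*}$. Substituting $\widehat\phi(\s+\m)=H_0((A^*)^{-1}(\s+\m))\widehat\phi((A^*)^{-1}(\s+\m))$ into $w_\phi$, writing $\m=\mathbf{q}_r+A^*\m'$, and using the $\IZ^n$-periodicity of $H_0$, the inner sum in $\m'$ reassembles into $P((A^*)^{-1}\s+\mathbf{p}_r)$, giving
\[
w_\phi(\s)=\sum_{r=0}^{d_A-1}\overline{H_0((A^*)^{-1}\s+\mathbf{p}_r)}\,P((A^*)^{-1}\s+\mathbf{p}_r),
\]
and analogously with $H_\ell$ in place of $H_0$ for $w_{\psi_\ell}$. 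Expanding $|w_\phi(\s)|^2+\sum_{\ell=1}^N|w_{\psi_\ell}(\s)|^2$ produces a double sum over $r,r'\in\{0,\dots,d_A-1\}$; the diagonal terms $(r=r')$ collect $\sum_{\ell=0}^N|H_\ell((A^*)^{-1}\s+\mathbf{p}_r)|^2=1$ by \eqref{UEP0}, while the off-diagonal terms, where $\mathbf{p}_{r'}-\mathbf{p}_r\equiv\mathbf{p}_k\pmod{\IZ^n}$ for a unique $k\ne 0$, collect $\sum_{\ell=0}^N H_\ell(\cdot)\overline{H_\ell(\cdot+\mathbf{p}_k)}=0$ by \eqref{UEPk}. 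What remains is $\sum_r|P((A^*)^{-1}\s+\mathbf{p}_r)|^2$, which by Lemma K (ii) integrates over $\IT^n$ to $d_A\int_{\IT^n}|P|^2$, matching the LHS of \eqref{sumDt}.

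The main obstacle will be the bookkeeping of the exceptional set $\mathcal{N}_{\widehat\phi}$, since \eqref{UEP0} and \eqref{UEPk} are asserted only off that set. The key observation is that $\mathcal{N}_{\widehat\phi}$ is $\IZ^n$-periodic and, by its very definition, $\s\in\mathcal{N}_{\widehat\phi}$ forces $\widehat\phi(\s+\m)=0$ for every $\m\in\IZ^n$; consequently $P((A^*)^{-1}\s+\mathbf{p}_r)=0$ whenever $(A^*)^{-1}\s+\mathbf{p}_r\in\mathcal{N}_{\widehat\phi}$, so the corresponding terms in the double sum vanish identically, and the UEP identities need only be invoked on the complementary set where they are explicitly available.
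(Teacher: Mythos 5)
Your proposal is correct and follows essentially the same route as the paper's proof: pass to the Fourier side, interpret the sums over $\k$ via Parseval for Fourier series applied to $\IZ^n$-periodizations, decompose into the $d_A$ cosets (the paper does this via Lemma~K(ii)), and kill the diagonal and off-diagonal terms with \eqref{UEP0} and \eqref{UEPk} respectively; your explicit handling of $\mathcal{N}_{\widehat{\phi}}$ through the vanishing of the periodization $P$ is the right justification for the restriction in those hypotheses, and your reduction to $j=1$ by unitarity of $D_A$ is a harmless cosmetic variant of the paper's carrying $D_{A^*}^j\widehat{f}$ through the computation.
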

\begin{proof}
Let $f \in L^2(\IR^n) $ such that $\widehat{f}$ is continuous and  compactly supported. Fix a $j \in \IZ$, ${\k} \in \IZ^n$  and
 $\ell \in \{ 1, \cdots, N \}$. Using the Parseval's equality, the change of variable $A^{*-j}{\t}= {\bf s}$ and \eqref{psi}, we can write
\begin{eqnarray*}
\langle f, D_{A}^{j-1} \tau_{\k}\psi_{\ell} \rangle & = & \langle \widehat{f}, \widehat{D_{A}^{j-1}
\tau_{\k}\psi_{\ell}} \rangle \\
&=& \int_{\IR^n} d_A^{j/2} \widehat{f}(A^{*j}{\bf s}) d_A^{1/2} \overline{\widehat{\psi_l}(A^*{\bf s})}
e^{2 \pi i {\k} \cdot A^*{\bf s}} \, d{\bf s} \\
&=& \sum_{{\bf m}\in \IZ^n}\int_{[0,1)^n-{\bf m}} d_A^{j/2} \widehat{f}(A^{*j}{\bf s}) d_A^{1/2}
\overline{H_{\ell}({\bf s})\widehat{\phi}({\bf s})} e^{2 \pi i {\k}\cdot A^*{\bf s}} \, d{\bf s}.
\end{eqnarray*}
Since $\widehat{f}$ has  compact support, the above sum only involves a finite number of  ${\bf m}$'s. Moreover,
bearing in mind that $H_{\ell}$ is a $\IZ^n$-periodic function and $A^*(\IZ^n) \subset \IZ^n$, we have
\begin{equation*}
\langle f, D_{A}^{j-1} \tau_{\k}\psi_{\ell} \rangle
= \int_{[0,1)^n} d_A^{1/2} \overline{H_{\ell}({\bf s})} \left(  \sum_{{\bf m}\in \IZ^n} \tau_{{\bf m}} \left(
 (D_{A^*}^j \widehat{f} )  \overline{\widehat{\phi}} \right)  ({\bf s}) \right)  e^{2 \pi i {\k}\cdot A^*{\bf s}} \, d{\bf s}.
\end{equation*}
According to our hypotheses, $\widehat{f}$, $H_{\ell}$ and $\widehat{\phi}$ are bounded, then the function into the above integral
is in $L^2(\IT^n)$. By (ii) in Lemma K, we have
\begin{eqnarray*}
&  & \langle f, D_{A}^{j-1} \tau_{\k}\psi_{\ell} \rangle \\
&=& \int_{[0,1)^n} d_A^{-1/2}\sum_{k=0}^{d_A-1}  \overline{H_{\ell}((A^*)^{-1}{\bf s} + \mathbf{p}_k )} \\
& & \hspace{1cm} \times \left(  \sum_{{\bf m}\in \IZ^n} \tau_{{\bf m}} \left( (D_{A^*}^j \widehat{f} )  \overline{\widehat{\phi}} \right)
 ((A^*)^{-1}{\bf s} + \mathbf{p}_k) \right)  e^{2 \pi i {\k}\cdot {\bf s}} \, d{\bf s}
\end{eqnarray*}
which is the $-{\k}$-th Fourier coefficient of the function
$$
d_A^{-1/2}\sum_{k=0}^{d_A-1}  \overline{H_{\ell}((A^*)^{-1}{\bf s} + \mathbf{p}_k )}
 \left(  \sum_{{\bf m}\in \IZ^n} \tau_{{\bf m}} \left( (D_{A^*}^j \widehat{f} )  \overline{\widehat{\phi}} \right)  ((A^*)^{-1}{\bf s}
 + \mathbf{p}_k) \right).
$$
With analogous computation for $\langle f, D_{A}^{j-1} \tau_{\k}\phi \rangle$, we get
\begin{eqnarray*}
&  & \sum_{{\k}\in \IZ^n } |\langle f, D_{A}^{j-1} \tau_{\k}\phi \rangle|^2 +
\sum_{\ell=1}^N\sum_{{\k}\in \IZ^n } |\langle f, D_{A}^{j-1} \tau_{\k}\psi_{\ell} \rangle|^2 \\
&=& d_A^{-1} \int_{[0,1)^n} \sum_{\ell=0}^N \Big| \sum_{k=0}^{d_A-1}  \overline{H_{\ell}(A^*)^{-1}({\bf s} + \mathbf{p}_k )}  \\
  & &  \hspace{1cm } \times \big(  \sum_{{\bf m}\in \IZ^n} \tau_{{\bf m}} \left( (D_{A^*}^j \widehat{f} )
  \overline{\widehat{\phi}} \right)  ((A^*)^{-1}{\bf s} + \mathbf{p}_k) \big) \Big|^2 \, d{\bf s}.
\end{eqnarray*}
Bearing in mind  the functions inside the integral are $\IZ^n$-periodic, we obtain
\begin{eqnarray*}
&  & \sum_{{\k}\in \IZ^n } |\langle f, D_{A}^{j-1} \tau_{\k}\phi \rangle|^2 +
\sum_{\ell=1}^N\sum_{{\k}\in \IZ^n } |\langle f, D_{A}^{j-1} \tau_{\k}\psi_{\ell} \rangle|^2 \\
&=& d_A^{-1} \int_{[0,1)^n} \sum_{\ell=0}^N  \sum_{k=0}^{d_A-1} | {H_{\ell}((A^*)^{-1}{\bf s} + \mathbf{p}_k )} |^2 \\
 & &  \hspace{1cm } \times \left|  \sum_{{\bf m}\in \IZ^n} \tau_{{\bf m}} \left( (D_{A^*}^j \widehat{f} )
  \overline{\widehat{\phi}} \right)  ((A^*)^{-1}{\bf s} + \mathbf{p}_k) \right|^2 \, d{\bf s}\\
 & & +   d_A^{-1} \int_{[0,1)^n}   \sum_{k =0}^{d_A-1}  \sum_{a=1}^{d_A-1} \left( \sum_{\ell=0}^N
 H_{\ell}((A^*)^{-1}{\bf s} + \mathbf{p}_k ) \overline{H_{\ell}((A^*)^{-1}{\bf s} + \mathbf{p}_k + \mathbf{p}_a )}
 \right) \\  &  &  \hspace{1cm}  \times     \left( \sum_{{\bf m}\in \IZ^n} \tau_{{\bf m}} \left(
  (D_{A^*}^j \widehat{f} )  \overline{\widehat{\phi}} \right)  ((A^*)^{-1}{\bf s} + \mathbf{p}_k) \right)\\
  &  &  \hspace{2cm}  \times  \left( \sum_{{\bf b}\in \IZ^n} \tau_{{\bf b}} \left( \overline{(D_{A^*}^j \widehat{f} )}
   {\widehat{\phi}} \right)  ((A^*)^{-1}{\bf s} + \mathbf{p}_k + \mathbf{p}_a) \right) \, d{\bf s}.
\end{eqnarray*}

By \eqref{UEP0} and \eqref{UEPk}
we obtain
\begin{eqnarray*}
&  & \sum_{{\k}\in \IZ^n } |\langle f, D_{A^*}^{j-1} \tau_{\k}\phi \rangle|^2 + \sum_{\ell=1}^N\sum_{{\k}\in \IZ^n }
|\langle f, D_{A^*}^{j-1} \tau_{\k}\psi_{\ell} \rangle|^2 \\
&=& d_A^{-1} \int_{[0,1)^n}  \sum_{k =0}^{d_A-1} \left|  \sum_{{\bf m}\in \IZ^n} \tau_{{\bf m}} \left( (D_{A^*}^j \widehat{f} )
\overline{\widehat{\phi}} \right)  ((A^*)^{-1}{\bf s} + \mathbf{p}_k) \right|^2 \, d{\bf s}.
\end{eqnarray*}
Since the sum over ${\bf m}$ is  finite and the funtions  $\widehat{f}$ and $\widehat{\phi}$ are bounded,\\
 $\left|  \sum_{{\bf m}\in \IZ^n} \tau_{{\bf m}} \left( (D_{A^*}^j \widehat{f} )  \overline{\widehat{\phi}} \right)  \right|^2$
 is in $L^2(\IT^n)$. Thus, by (ii) in Lemma K we obtain
\begin{eqnarray*}
&  & \sum_{{\k}\in \IZ^n } |\langle f, D_{A}^{j-1} \tau_{\k}\phi \rangle|^2 +
\sum_{\ell=1}^N\sum_{{\k}\in \IZ^n } |\langle f, D_{A}^{j-1} \tau_{\k}\psi_{\ell} \rangle|^2 \\
&=&  \int_{[0,1)^n}   \left|  \sum_{{\bf m}\in \IZ^n} \tau_{{\bf m}} \left( (D_{A^*}^j \widehat{f} )
 \overline{\widehat{\phi}} \right)  ({\bf s}) \right|^2 \, d{\bf s} =   \sum_{{\k}\in \IZ^n }
  |\langle f, D_{A}^{j} \tau_{\k}\phi \rangle|^2,
\end{eqnarray*}
as we wanted to prove.
\end{proof}

We are ready to prove the following  version of Unitary Extension Principle.

\begin{thm}[Unitary Extension Principle] \label{thm:UEP}  Let $\phi \in L^2(\IR^n)$ such that
 the origin is a point of $A^*$-approximate continuity of  $| \widehat{\phi}|$, provided $| \widehat{\phi}({\bf 0})|=1$, and
\[
\widehat{\phi}({A^*\t})=H_0({\t}) \widehat{\phi}({\t}), \quad \textrm{a.e.,}
\]
where $H_0 \in L^{\infty} (\IT^n)$. Let $H_{1}, \dots, H_N \in L^{\infty} (\IT^n)$ such that \eqref{UEP0} and \eqref{UEPk} hold.
If  $\psi_{1},\dots, \psi_N \in L^2(\IR^n)$ are defined by \eqref{psi}, then
$\{ \psi_{\ell} \ : \ \ell=1,\dots, N \}$  is a Parseval wavelet frame for $L^2(\IR^n)$.
\end{thm}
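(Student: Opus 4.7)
The plan is to establish the Parseval identity
\[
\sum_{\ell=1}^{N}\sum_{j\in\IZ}\sum_{\k\in\IZ^n}\bigl|\langle f, D_A^{j}\tau_{\k}\psi_{\ell}\rangle\bigr|^{2}=\|f\|_{2}^{2}
\]
first for $f$ in the dense class $\mathcal{D}\subset L^{2}(\IR^{n})$ of functions whose Fourier transform is continuous and compactly supported, and then to promote it to all of $L^{2}(\IR^n)$. Both Lemmas \ref{lem:02} and \ref{lem:03} assume $|\widehat{\phi}|\le 1$ a.e., so I would first reduce to this case: because $[\widehat\phi,\widehat\phi]=0$ on $\mathcal{N}_{\widehat\phi}$ forces $\widehat\phi=0$ there, replacing $H_{0}$ by $\widetilde{H_{0}}:=H_{0}\chi_{\IR^n\setminus\mathcal{N}_{\widehat\phi}}$ preserves the refinement equation; condition (\ref{UEP0}) then gives $|\widetilde{H_{0}}|\le 1$ a.e., and Proposition I yields $|\widehat\phi(\t)|=\prod_{j\ge 1}|\widetilde{H_{0}}((A^{*})^{-j}\t)|\le 1$.

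Fix $f\in\mathcal{D}$. Summing the telescoping identity (\ref{sumDt}) of Lemma \ref{lem:03} over $j$ from $j_{0}$ to $J$ collapses the $\phi$-terms and produces
\[
\sum_{\k}\bigl|\langle f,D_{A}^{J}\tau_{\k}\phi\rangle\bigr|^{2}-\sum_{\k}\bigl|\langle f,D_{A}^{j_{0}-1}\tau_{\k}\phi\rangle\bigr|^{2}=\sum_{j=j_{0}-1}^{J-1}\sum_{\ell=1}^{N}\sum_{\k}\bigl|\langle f,D_{A}^{j}\tau_{\k}\psi_{\ell}\rangle\bigr|^{2}.
\]
Lemma \ref{lem:02} forces the first term on the left to $\|f\|_{2}^{2}$ as $J\to\infty$, while Proposition G forces the subtracted term to $0$ as $j_{0}\to-\infty$. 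Both limits exist because the right-hand side is monotone in $J$ and in $-j_{0}$, so the Parseval identity holds on $\mathcal{D}$.

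The Bessel bound on all of $L^{2}$ follows at once: the displayed telescope together with the upper inequality of Lemma \ref{lem:02} shows that every finite partial sum $\sum_{|j|\le M,\,|\k|\le M,\,\ell}|\langle f,D_{A}^{j}\tau_{\k}\psi_{\ell}\rangle|^{2}$ is at most $\|f\|_{2}^{2}$ for $f\in\mathcal{D}$; by continuity of each inner product in $f$ and density of $\mathcal{D}$ in $L^{2}(\IR^n)$, this bound passes to every $f\in L^{2}$. The analysis operator $f\mapsto(\langle f,D_{A}^{j}\tau_{\k}\psi_{\ell}\rangle)_{j,\k,\ell}$ is therefore a bounded map $L^{2}(\IR^n)\to\ell^{2}$, and being isometric on the dense subspace $\mathcal{D}$ it is isometric on all of $L^{2}$. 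The main obstacle is coordinating the two ends of the telescope: the high-resolution limit $J\to\infty$ relies on the approximate-continuity hypothesis via Lemma \ref{lem:02}, whereas the low-resolution limit $j_{0}\to-\infty$ relies on the general decay of Proposition G, and both must be paired with the preliminary reduction $|\widehat\phi|\le 1$ supplied by Proposition I; once these three ingredients are in place the remaining computation is mechanical.
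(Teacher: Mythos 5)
Your proposal is correct and follows essentially the same route as the paper's proof: reduce to $|\widehat\phi|\le 1$ via Proposition I, telescope the identity of Lemma \ref{lem:03}, send the top of the telescope to $+\infty$ using Lemma \ref{lem:02} and the bottom to $-\infty$ using Proposition G, and finish by density. Your handling of the endgame (passing the Bessel bound through finite partial sums and extending the isometry from the dense class) simply fills in the "density argument" the paper leaves implicit, and your explicit truncation of $H_0$ to $\widetilde{H_0}$ on $\mathcal{N}_{\widehat\phi}$ before invoking Proposition I is a slightly more careful version of the same step.
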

\begin{proof}
By \eqref{UEP0}, it is obvious that $|H_0 ({\bf t}) | \leq 1$ a.e. Thus, according to Proposition~I, we have
 that $| \widehat{\phi}({\bf t})|\leq 1$ a.e.

 Let $\varepsilon >0$ be given and let $f \in L^2(\IR^n) $ such that $\widehat{f}$ is continuous and  compactly supported.
 For $j_0 \in \IZ$, Lemma \ref{lem:03}  shows that
$$ \sum_{\k \in \IZ^n} | \langle f, D_{A}^{j_0} \tau_{\k}\phi \rangle|^2 =
\sum_{\k \in \IZ^n} | \langle f, D_{A}^{j_0-1} \tau_{\k}\phi \rangle|^2 + \sum_{\ell=1}^N \sum_{\k \in \IZ^n}
| \langle f, D_{A}^{j_0-1} \tau_{\k}\psi_{\ell} \rangle|^2.
$$
Repeating the argument for $j_0-1, j_0-2,\cdots$, it follows that if $j < j_0$ we obtain
\begin{equation*} \sum_{\k \in \IZ^n} | \langle f, D_{A}^{j_0} \tau_{\k}\phi \rangle|^2 =
\sum_{\k \in \IZ^n} | \langle f, D_{A}^{j} \tau_{\k}\phi \rangle|^2 + \sum_{\ell=1}^N \sum_{m=j}^{j_0-1 }\sum_{\k \in \IZ^n}
| \langle f, D_{A}^{m} \tau_{\k}\psi_{\ell} \rangle|^2.
\end{equation*}
Then, by Lemma \ref{lem:02},
 there exists $J \in \IN$ such that if $j_0\geq J$ and $j < j_0$ we have
\begin{equation} \label{acota}
(1- \varepsilon) \| f \|_2^2 \leq  \sum_{\k \in \IZ^n} | \langle f, D_{A}^{j} \tau_{\k}\phi \rangle|^2
+ \sum_{\ell=1}^N \sum_{m=j}^{j_0-1 }\sum_{\k \in \IZ^n} | \langle f, D_{A}^{m} \tau_{\k}\psi_{\ell} \rangle|^2 \leq
 \| f \|_2^2.
\end{equation}
By Proposition G we know that
$$
\lim_{j \to - \infty} \sum_{\k \in \IZ^n} | \langle f, D_{A}^{j} \tau_{\k}\phi \rangle|^2=0.
$$
Therefore, letting $j \to - \infty$ in \eqref{acota}, for $j_0\geq J$ we have
\begin{equation*}
(1- \varepsilon) \| f \|_2^2 \leq  \sum_{\ell=1}^N \sum_{m=- \infty}^{j_0-1 }\sum_{\k \in \IZ^n}
 | \langle f, D_{A}^{m} \tau_{\k}\psi_{\ell} \rangle|^2 \leq  \| f \|_2^2.
\end{equation*}
In addition,  letting $j_0 \to  \infty$,
\begin{equation*}
(1- \varepsilon) \| f \|_2^2 \leq  \sum_{\ell=1}^N \sum_{m=- \infty}^{\infty }\sum_{\k \in \IZ^n}
| \langle f, D_{A}^{m} \tau_{\k}\psi_{\ell} \rangle|^2 \leq  \| f \|_2^2.
\end{equation*}
Since $\varepsilon >0$ is arbitrary, we have
\begin{equation*}
  \sum_{\ell=1}^N \sum_{m=- \infty}^{\infty }\sum_{\k \in \IZ^n} | \langle f, D_{A}^{m} \tau_{\k}\psi_{\ell}
  \rangle|^2 = \| f \|_2^2.
  \end{equation*}
  The proof is finished by a density argument.
\end{proof}

A more flexible way for constructing wavelet frames is the following result.
\begin{thm}[Oblique Extension Principle] \label{thm:OEP}
  Let $\phi \in L^2(\IR^n)$ such that
\[
\widehat{\phi}({A^*\t})=H_0({\t}) \widehat{\phi}({\t}), \quad \textrm{a.e.,}
\]
where $H_0 \in L^{\infty} (\IT^n)$. Let $H_{1}, \dots, H_N \in L^{\infty} (\IT^n)$ and define $\psi_{1},\dots, \psi_N \in L^2(\IR^n)$
by \eqref{psi}.
Assume that there exists $S$ a non-negative $\IZ^n$-periodic measurable function  such that $\sqrt{S}| \widehat{\phi} | \in L^2(\IR^n)$,
 the origin is a point of $A^*$-approximate continuity of $S| \widehat{\phi} |^2 $, provided $S({\bf 0})| \widehat{\phi}({\bf 0}) |^2=1$.
 Moreover \eqref{SOEP0} and \eqref{SOEPk} hold. Then  the set of functions $\{ \psi_{\ell} \ : \ \ell=1,\dots, N \}$ is a
 Parseval wavelet frame for $L^2(\IR^n)$.
\end{thm}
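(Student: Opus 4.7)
The plan is to reduce the Oblique Extension Principle to the Unitary Extension Principle (Theorem~\ref{thm:UEP}) by the standard rescaling $\widehat{\tilde\phi}(\t):=\sqrt{S(\t)}\,\widehat{\phi}(\t)$, which defines $\tilde\phi\in L^2(\IR^n)$ because $\sqrt{S}\,|\widehat{\phi}|\in L^2$ by hypothesis. Alongside, I would introduce the rescaled filters
\[
\tilde H_0(\t)=\sqrt{\frac{S(A^*\t)}{S(\t)}}\,H_0(\t),\qquad \tilde H_\ell(\t)=\frac{H_\ell(\t)}{\sqrt{S(\t)}}\quad(\ell=1,\dots,N),
\]
with the convention that every tilde-filter is set to $0$ on the $\IZ^n$-periodic set $\{S=0\}$. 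A key preliminary observation is that \eqref{SOEP0} yields the pointwise bounds $\sum_{\ell=1}^N|H_\ell(\t)|^2\le S(\t)$ and $S(A^*\t)|H_0(\t)|^2\le S(\t)$ a.e.\ on $\IR^n\setminus\mathcal{N}_{\widehat{\phi}}$, whence $|\tilde H_0|,|\tilde H_\ell|\le 1$ a.e.\ and so every $\tilde H_\ell\in L^\infty(\IT^n)$. The same identity forces $H_\ell(\t)=0$ for $\ell\ge 1$ on $\{S=0\}\setminus\mathcal{N}_{\widehat{\phi}}$, so the truncation of the $\tilde H_\ell$'s on $\{S=0\}$ loses no information.

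Once the substitution is in place, three short verifications remain. First, direct computation gives
\[
\widehat{\tilde\phi}(A^*\t)=\tilde H_0(\t)\,\widehat{\tilde\phi}(\t),\qquad \widehat{\psi_\ell}(A^*\t)=\tilde H_\ell(\t)\,\widehat{\tilde\phi}(\t)\quad\text{a.e.,}
\]
so the wavelet generators obtained from $\tilde\phi,\tilde H_0,\dots,\tilde H_N$ via the UEP recipe \eqref{psi} coincide with the original $\psi_\ell$. Second, dividing \eqref{SOEP0} by $S(\t)$ yields \eqref{UEP0} for the tilde-filters, and dividing \eqref{SOEPk} by $\sqrt{S(\t)\,S(\t+\mathbf{p}_k)}$ yields \eqref{UEPk}; the identity $S(A^*\t+A^*\mathbf{p}_k)=S(A^*\t)$ needed along the way is valid because $A^*\mathbf{p}_k\in\IZ^n$ and $S$ is $\IZ^n$-periodic. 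Third, the hypothesis that the origin is a point of $A^*$-approximate continuity of $S\,|\widehat{\phi}|^2=|\widehat{\tilde\phi}|^2$ with value $1$ transfers, by continuity of the square root, to $A^*$-approximate continuity of $|\widehat{\tilde\phi}|$ at the origin with value $1$, which is exactly what Theorem~\ref{thm:UEP} requires. Applying the UEP then concludes that $\{\psi_1,\dots,\psi_N\}$ is a Parseval wavelet frame for $L^2(\IR^n)$.

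I expect the main obstacle to be the lack of any a priori lower bound on $S$, which at first sight threatens to produce unbounded rescaled filters $H_\ell/\sqrt{S}$. The resolution is self-contained: identity \eqref{SOEP0} itself delivers the pointwise estimate $|H_\ell(\t)|^2\le S(\t)$, so the rescaled filters cannot blow up; on the remaining set $\{S=0\}$ those filters are forced to vanish, so the truncation is harmless. This self-boundedness is precisely what enables the OEP-to-UEP reduction without imposing any extra regularity on $S$ beyond the approximate-continuity hypothesis at the origin.
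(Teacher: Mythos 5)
Your proposal is correct and follows essentially the same route as the paper: the paper also sets $\widehat{\varphi}=\sqrt{S}\,\widehat{\phi}$, rescales the filters to $Q_0=\sqrt{S(A^*\cdot)/S}\,H_0$ and $Q_\ell=H_\ell/\sqrt{S}$ with the convention $0/0=0$, checks that \eqref{SOEP0} forces these to lie in $L^{\infty}(\IT^n)$ and that \eqref{SOEP0}--\eqref{SOEPk} become \eqref{UEP0}--\eqref{UEPk}, and then invokes Theorem~\ref{thm:UEP}. Your explicit remark that $|H_\ell|^2\le S$ on $\{S=0\}\setminus\mathcal{N}_{\widehat{\phi}}$ makes the truncation harmless is exactly the observation the paper makes in passing.
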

\begin{proof}
Assume that the conditions of Theorem \ref{thm:OEP} are satisfied. Define $\varphi \in L^2(\IR^n)$ by
\begin{equation} \label{phi111111}
\widehat{\varphi} ({\t})= \sqrt{S({\t})}\widehat{\phi} ({\t})
\end{equation}
and define $Q_0, Q_1 \dots, Q_N$, $\IZ^n$-periodic measurable functions, by
$$
Q_0({\t})= \sqrt{\frac{S(A^*{\t})}{S({\t})}} H_0({\t}), \qquad Q_{\ell}({\t})= \sqrt{\frac{1}{S({\t})}} H_{\ell}({\t}),
\quad  \ell= 1,\dots, N.
$$
Observe that according to $\eqref{SOEP0}$, we have $Q_0, Q_1 \dots, Q_N \in L^{\infty}(\IT^n)$. In addition, if $S({\t})=0$ then $ H_{\ell}({\t})=0$, $\ell=1,\dots, N$ and $S(A^*{\t}) H_0({\t})=0$.
In this case we use the convention $0/0=0$ in the definitions of $Q_0, Q_1 \dots, Q_N$.

Now, we show that $\varphi$ and $Q_0, Q_1 \dots, Q_N $ satisfy the hypotheses of Theorem \ref{thm:UEP}.
First,  the origin is a point of $A^*$-approximate continuity for $ |\widehat{\varphi}|^2$ if we set
 $ |\widehat{\varphi}({\bf 0})|^2=1$ by hypothesis.

Second,
\begin{eqnarray*}
\widehat{\varphi} (A^*{\t}) &=& \sqrt{S(A^*{\t})}\widehat{\phi} (A^*{\t}) = \sqrt{S(A^*{\t})} H_0({\t}) \widehat{\phi} ({\t})= Q_0({\t}) \widehat{{\varphi}} ({\t}) \qquad a.e.
\end{eqnarray*}

Third, by the definition of $Q_{\ell}$ and \eqref{SOEP0}, we obtain
\begin{equation*} \label{eq:111}
\sum_{\ell=0}^N |Q_{\ell}({\t})|^2 = \frac{S(A^*{\t})}{S({\t})} |H_0({\t})|^2 +
\sum_{\ell=1}^N \frac{1}{S({\t})} |H_{\ell}({\t})|^2 =1 \quad \textrm{a.e. ${\t} \in \IR^n \setminus \mathcal{N}_{\widehat{\varphi}}$.}
\end{equation*}
Moreover, by \eqref{SOEPk}, bearing in mind that  $S$ is a $\IZ^n$-periodic functions and $A^*\mathbf{p}_k \in \IZ^n$, we also have
\begin{eqnarray*} \label{eq:1112}
\sum_{\ell=0}^N Q_{\ell}({\t})  \overline{Q_{\ell}({\t} + \mathbf{p}_k)} &= &
 \frac{S(A^*{\t})}{\sqrt{S({\t}) S ({\t} + \mathbf{p}_k)}} H_{0}({\t})  \overline{H_{0}({\t}
 +\mathbf{p}_k)}\\ & + &   \sum_{\ell=1}^N \frac{1}{\sqrt{S({\t}) S ({\t} +
 \mathbf{p}_k)}} H_{\ell}({\t})  \overline{H_{\ell}({\t} + \mathbf{p}_k)} =0,
\end{eqnarray*}
for a.e. ${\t} \in \IR^n \setminus \mathcal{N}_{\widehat{\varphi}}$ and for any $\mathbf{p}_k$,
 $k=1,\dots, d_A-1$, such that ${\t}+ \mathbf{p}_k  \in \IR^n \setminus \mathcal{N}_{\widehat{\varphi}}$.

 Since we have seen the hypotheses of Theorem \ref{thm:UEP}, we conclude that the set functions
  $\{ \widetilde{\psi_{\ell}} \ : \ \ell=1, \dots, N \}$, where
$$
\widehat{ \widetilde{\psi_{\ell}}} (A^*{\t}) = Q_{\ell}({\t}) \widehat{\varphi} ({\t}), \qquad a.e.
$$
is a Parseval wavelet frame for $L^2(\IR^n)$. Indeed, by the observation that
$$
\widehat{ \widetilde{\psi_{\ell}}} (A^*{\t}) = Q_{\ell} ({\t}) \widehat{\varphi} ({\t}) =
\sqrt{\frac{1}{S({\t})}} H_{\ell}({\t}) \sqrt{S({\t})}\widehat{\phi} ({\t})= \widehat{ \psi_{\ell}} (A^*{\t}),
 \qquad a.e.
$$
the proof is completed.
\end{proof}

A key tool to prove $i)$ implies $ii)$ in Theorem \ref{thm:main} is the fundamental function.
 Let $\phi \in L^2(\IR^n)$ such that
\[
\widehat{\phi}({A^*\t})=H_0({\t}) \widehat{\phi}({\t}), \quad \textrm{a.e.,}
\]
where $H_0$ is a finite a.e.,  $\IZ^n$-periodic measurable function such that $H_0({\t})=0$ a.e.
${\t} \in  \mathcal{N}_{\widehat{\phi}} $. Moreover, let $\psi_{1},\dots, \psi_N \in L^2(\IR^n)$ defined by
\begin{equation} \label{psi11}
\widehat{\psi_{\ell}}({A^*\t})=H_{\ell}({\t}) \widehat{\phi}({\t}) \quad \textrm{a.e.,} \qquad  \ell=1,\dots, N,
\end{equation}
where $H_{\ell}$ is a finite a.e., $\IZ^n$-periodic measurable function such that $H_{\ell}({\t})=0$ a.e.
${\t} \in  \mathcal{N}_{\widehat{\phi}} $.
The following
 \begin{equation} \label{fundafun}
 \Theta({\t})= \sum_{m=0}^{\infty} \sum_{\ell=1}^N |H_{\ell} (A^{*m}{\t})|^2 \prod_{k=0}^{m-1} |H_{0} (A^{*k}{\t})|^2,
 \end{equation}
 with  the convention $\prod_{k=0}^{-1} |Q_{0} (A^{*k}{\t})|^2=1$, is usually called {\em fundamental function}
 associated to $H_0, \dots, H_N$. Note that fundamental functions were introduced in \cite{RS1}. Assuming that
  $\{ \psi_1,\dots, \psi_N \}$ is a Parseval wavelet frame, we will focus on properties $\Theta$, for instance,
   we will see that $\Theta$ is a finite a.e. measurable function.

\begin{proof}[Proof of Theorem \ref{thm:main}]

That  $ii)$ implies $i)$ follows by  Oblique Extension Principle.

We prove i) implies $ii)$. Consider
${Q_{\ell}}$, $\ell=0,\dots, N$, defined as $Q_{\ell}(\t)=H_{\ell}(\t)$,   a.e. on
${\t} \in\IR^n \setminus  \mathcal{N}_{\widehat{\phi}}$ and ${Q_{\ell}}(\t)=0$,  a.e.
on ${\t} \in  \mathcal{N}_{\widehat{\phi}}$.

Observe that
\begin{equation} \label{phiQ}
\widehat{\phi}({A^*\t})=Q_0({\t}) \widehat{\phi}({\t}), \quad \textrm{a.e.,}
\end{equation}
and
\begin{equation} \label{psiQ}
\widehat{\psi_{\ell}}({A^*\t})=Q_{\ell}({\t}) \widehat{\phi}({\t}) \quad \textrm{a.e.,} \qquad  \ell=1,\dots, N.
\end{equation}
Let $\Theta$ be the fundamental function associated to $Q_0,\dots, Q_N$.
We will see that  $\Theta$ is a  non-negative $\IZ^n$-periodic measurable function satisfying
$\sqrt{\Theta}| \widehat{\phi} | \in L^2(\IR^n)$, (a), (b)  and   (c) of ii) if we consider $\Theta$ instead of $S$ in those conditions.

  First, we show that $\Theta$ is a non-negative, $\IZ^n$-periodic measurable function such that
  $\sqrt{\Theta}| \widehat{\phi} | \in L^2(\IR^n)$. Let $f \in L^2(\IR^n)$.
  Since $\{ \psi_{\ell} \ : \ \ell=1,\dots, N \}$ is a Parseval wavelet frame for $L^2(\IR^n)$ and by Theorem B,  we have
$$
\sum_{j=1}^{\infty} \sum_{{\k} \in \IZ^n} \sum_{\ell=1}^N |\langle f, d_A^{-j/2} \tau_{{\k}} D_A^{-j}\psi_{\ell}\rangle|^2
\leq  \|f \|_2^2,
$$
or equivalently
$$
\sum_{j=1}^{\infty}  \sum_{\ell=1}^N  \int_{[0,1]^n} | \sum_{{\k} \in \IZ^n}
 \widehat{f}({\bf s}- {\k})  \overline{\widehat{\psi_{\ell}}(A^{*j}({\bf s}-{\k}))}|^2 \, d{\bf s} \leq  \|f \|_2^2.
$$
 Bearing in mind that $Q_{\ell}$ is $\IZ^n$-periodic, $A^*(\IZ^n) \subset \IZ^n$,
\eqref{psiQ} and \eqref{phiQ}, we obtain
$$
  \int_{[0,1]^n} \Theta({\t}) | \sum_{{\k} \in \IZ^n}  \widehat{f}({\bf t}- {\k})
   \overline{\widehat{\phi}({\bf t}-{\k})}|^2 \, d{\bf t} \leq  \|f \|_2^2.
$$
Therefore, $\Theta  \sum_{{\k} \in \IZ^n}  | \widehat{\phi}(\cdot-{\k})|^2$ is in $L^{\infty}(\IT^n)$.
This implies that
$\sqrt{\Theta} \widehat{\phi} \in L^{2}(\IR^n)$ and  $\Theta$ is finite a.e..  The measurability of  $\Theta$
holds because it is
defined as the pointwise limit of measurable functions a.e..
The  $\IZ^n$-periodicity of $\Theta$ follows by its definition.

We check (a) in $ii)$. 
According to Theorem A, the definition of $\widehat{\psi_{\ell}}$ and the refinement equation associated to $\phi$,
\begin{eqnarray*}
1 &= & \sum_{j \in \IZ}  \sum_{\ell=1}^N  | \widehat{\psi_{\ell}} (A^{*j}{\t})|^2 =\lim_{J\to - \infty}
\sum_{j= J+1}^{\infty}  \sum_{\ell=1}^N  | \widehat{\psi_{\ell}} (A^{*j}{\t})|^2  \\
 & & \lim_{J\to - \infty} \sum_{j= J}^{\infty}  ( \sum_{\ell=1}^N  |Q_{\ell} (A^{*j}{\t})|^2
 \prod_{k=J}^{j-1} |Q_{0} (A^{*k}{\t})|^2 ) |\widehat{\phi}(A^{*J}{\t}) |^2 \\ &=&
 \lim_{J\to - \infty} \Theta (A^{*J}{\t}) |\widehat{\phi}(A^{*J}{\t}) |^2 \qquad a.e..
\end{eqnarray*}
Thus, the origin is a point of $A^*$-approximate continuity of  $\Theta |\widehat{\phi} |^2$, if we
set $\Theta ({\bf 0}) |\widehat{\phi}({\bf 0}) |^2 =1$, follows  by Lemma J.

To  prove  the condition (b) in $ii)$, observe that from the definition of $\Theta$, we have
\begin{eqnarray*}
\Theta(\t) &= & \sum_{m=0}^{\infty} \sum_{\ell=1}^N |Q_{\ell} (A^{*m}{\t})|^2 \prod_{k=0}^{m-1} |Q_{0} (A^{*k}{\t})|^2 \\
&= &  \Theta(A^*\t) |Q_{0} ({\t})|^2+  \sum_{\ell=1}^N |Q_{\ell} ({\t})|^2 .
\end{eqnarray*}
Since $Q_{\ell} ({\t})=H_{\ell} ({\t})$ a.e. on ${\t} \in\IR^n \setminus  \mathcal{N}_{\widehat{\phi}}$, we conclude that
 $\Theta$ satisfies the condition (b).

We see (c) in $ii)$. Let $k \in \{ 1,\dots, d_A-1 \}$.  For almost  $(A^*)^{-1}{\t}$ and $ (A^*)^{-1}{\t} + \mathbf{p}_k$
points that are in $ \IR^n \setminus \mathcal{N}_{\widehat{\phi}}$, then there exist ${\k}_1, {\k}_2 \in \IZ^n$
such that $ \widehat{\phi}((A^*)^{-1}{\t} + {\bf k}_1)  \widehat{\phi}((A^*)^{-1}{\t} + {\bf k}_2 + \mathbf{p}_k) \neq 0$.
Let  ${\bf q} =  A^*({\bf k}_2- {\bf k}_1)+ A^*\mathbf{p}_k$ and observe that
$\mathbf{q} \in \IZ^n \setminus A^*\IZ^n $.   We call $(A^*)^{-1}{\bf s}=(A^*)^{-1}{\t} + {\bf k}_1$.
 By Theorem A, the definition of $\widehat{\psi_{\ell}}$, the refinement equation
associated to $\phi$ and the definition of the fundamental function $\Theta$, we have
\begin{eqnarray*}
0 &= & \sum_{\ell=1}^N \sum_{j = 0}^{\infty} \widehat{\psi_{\ell}} (A^{*j}{\bf s})
 \overline{\widehat{\psi_{\ell}} (A^{*j}({\bf s} + \mathbf{q}))}  \\
 &=&  \widehat{\phi }((A^*)^{-1}{\bf s}) \overline{\widehat{\phi }((A^*)^{-1}{\bf s} + (A^*)^{-1}\mathbf{q})} \\
  & & \times
 \Big( \Theta({\bf s})  Q_{0} ((A^*)^{-1}{\bf s})\overline{ Q_{0} ((A^*)^{-1}{\bf s} + (A^*)^{-1}\mathbf{q}) }  \\
 & &  \hspace{0.5cm}  + \sum_{\ell=1}^N
 Q_{\ell} ((A^*)^{-1}{\bf s }) \overline{Q_{\ell} ((A^*)^{-1}{\bf s} + (A^*)^{-1}\mathbf{q})} \Big),
\end{eqnarray*}
Bearing in mind that  $Q_{\ell} ({\t})=H_{\ell} ({\t})$ a.e. on ${\t} \in\IR^n \setminus  \mathcal{N}_{\widehat{\phi}}$
and they are $\IZ^n$-periodic functions, we have
\begin{equation*}
0  =
  \Theta({\bf t})  H_{0} ((A^*)^{-1}{\bf t}) \overline{H_{0} ((A^*)^{-1}{\bf t} + \mathbf{p}_k) }    + \sum_{\ell=1}^N
 H_{\ell} ((A^*)^{-1}{\bf t}) \overline{H_{\ell} ((A^*)^{-1}{\bf t} + \mathbf{p}_k)}. 
\end{equation*}
Hence, the condition (c) in $ii)$ of Theorem~\ref{thm:main} follows.
Thus the proof is finished.
\end{proof}

We note that if $\Psi= \{ \psi_{\ell} ~:~ \ell=1, \dots N\}$, a Parseval wavelet frame for $ L^2(\IR^n)$,
is constructed under the conditions in Theorem 1, then $\Psi$ is an $A$-FMRA based Parseval wavelet frame. First, we have that the function $\varphi$ defined by \eqref{phi111111} is refinable and $\widehat{\varphi}$ is $A^*$-locally nonzero at the origin. Thus according to  Lemma F and Theorem H,
 $ \{ U_j: = \overline{\textrm{span}}\{ d_A^{j/2}  \varphi (A^j \cdot- {\k}) ~:~ {\k} \in \IZ^n \} \}_{j \in \IZ} $
is an $A$-FMRA.
Finally, by the proof of Theorem 4 and Lemma~C,
$\Psi \subset U_1$ follows. This proves our assertion.

We also note that  if $\phi$ is a refinable functions involved in Theorem \ref{thm:main}, observe that the origin must be an $A^*$-density point for the support of $\widehat{\phi}$.
Otherwise the origin is not able to be a point of $A^*$-approximate continuity for $S|\widehat{\phi}|^2$ when we set  $S({\bf 0})|\widehat{\phi}({\bf 0})|^2=1$.\\

\subsection{Proof of Theorem \ref{thm:main2} and Corollary \ref{cor:main2}.}

Using Theorem \ref{thm:main} and results on multiresolution analyses we prove Theorem \ref{thm:main2}.

\begin{proof}[Proof of Theorem \ref{thm:main2}]
We see that \mbox{i)} implies \mbox{ii)}. According to Lemma D and Lemma~E, there exist $H_0, H_1,\dots, H_N$ some
$\IZ^n$-periodic measurable functions such that
$$
\widehat{\varphi}(A^*{\t})=H_0 ({\t}) \widehat{\varphi}({\t}) \quad \textrm{a.e., and} \quad
\widehat{\psi_{\ell}}(A^*{\t})=H_{\ell} ({\t}) \widehat{\varphi}({\t}), \quad {\ell =1,\dots,N} \quad \textrm{a.e.}
$$
because  $\{ \psi_1,\dots, \psi_N\} \subset V_1$. Since  $\{ \varphi( \cdot - {\k}) : {\k} \in \IZ^n\}$
is a Parseval frame for $V_0$, $H_0$ can be taken  in $L^{\infty}(\IT^n) $ by Lemma F.
Indeed, it can be assumed that $H_{\ell} (\t)=0$, $\ell=0,1, \dots, N$, a.e. on $\mathcal{N}_{\widehat{\varphi}} $.

Let $\Theta$ be the fundamental function associated to $H_0, H_1,\dots, H_N$  defined as in \eqref{fundafun}.
 Since $\Psi$ is a Parseval frame for $L^2(\IR^n)$, we have already seen that $\Theta$ satisfies the
 condition $\mbox{ii)}$ in Theorem \ref{thm:main} if we consider $\Theta$ instead of $S$. It remains to see that $\Theta$,   $H_1,\dots, H_N$ are in $L^{\infty}(\IT^n)$, and
 the origin is a point of $A^*$-approximate continuity for $\Theta$ is we set $\widehat{\Theta}({\bf 0})=1$.

 We have seen that  $\Theta({\t})
 \sum_{{\k} \in \IZ^n}  | \widehat{\phi}({\bf t}-{\k})|^2 \in L^{\infty}(\IT^n)$ in the proof of Theorem \ref{thm:main}.
  Furthermore, since  $\sum_{{\k} \in  \IZ^n} |\widehat{\varphi} ({\t} - {\k})|^2=1$ a.e. $\t \in \IT^n \setminus
  \mathcal{N}_{\widehat{\varphi}} $ and   bearing in mind that $\Theta (\t)=0$ a.e. on $\mathcal{N}_{\widehat{\varphi}} $,
  we conclude that  $\Theta$ is in $L^{\infty}(\IT^n)$. Hence  $H_1,\dots, H_N$ are in $L^{\infty}(\IT^n)$.

By $\mbox{ii)}$ in Theorem \ref{thm:main} we know that the origin is a point of $A^*$-approximate continuity for
 $\Theta |\widehat{\varphi}|^2$ is we set $\Theta({\bf 0})|\widehat{\varphi}({\bf 0})|^2=1$.  By Theorem H,
 the origin is a point of $A^*$-approximate continuity for $\widehat{\varphi}$ is we set $\widehat{\varphi}({\bf 0})=1$.
 Hence, the origin is a point of $A^*$-approximate continuity for $\Theta$ is we set $\Theta({\bf 0})=1$ follows.

We check that $ii)$ implies $i)$. By Lemma D, $\Psi \subset V_1$. The origin is a point of $A^*$-approximate continuity
for $S |\widehat{\varphi}|^2$, if we set $S({\bf 0})|\widehat{\varphi}({\bf 0})|^2=1$, because  the origin is a point
of $A^*$-approximate continuity for $S$ and $ |\widehat{\varphi}|^2$, if we set $S({\bf 0})=1$ and
 $|\widehat{\varphi}({\bf 0})|^2=1$ respectively.
 We conclude that $\Psi$ is a Parseval wavelet frame for $L^2(\IR^n)$ by Theorem \ref{thm:main}. Hence the proof is finished.
\end{proof}

To prove Corollary \ref{cor:main2}, we need the following condition of orthogonality.
\begin{lem} \label{lem:ort}
Let  $\varphi\in L^2(\IR^n)$ such that $\{\tau_{\k}\phi ~:~ {\k}\in
\IZ^{n}\}$ is a Parseval frame for $V$, a subspace of $L^2(\IR^n)$.
 Assume that there exists $H_0 \in L^{\infty}(\IT^n)$ such that
\begin{equation} \label{H0orto}
\widehat{\varphi}(A^*{\t}) = H_0({\t}) \widehat{\varphi}({\t}), \qquad \textrm{a.e. on } \IR^n.
\end{equation}
Let $f \in D_A(V)$ such that
\begin{equation} \label{Hforto}
\widehat{f}(A^*{\t}) = H_f({\t}) \widehat{\varphi}({\t}), \qquad \textrm{a.e. on } \IR^n,
\end{equation}
for some function $H_f$ in $L^2(\IT^n)$. Then $f$
is orthogonal to the subspace $V$ if and only if
\begin{equation} \label{sumHfH0ort}
  \sum_{k =0}^{d_A-1} H_f({\t}+  \mathbf{p}_k ) \overline{H_{0}({\t} +  \mathbf{p}_k )}  = 0, \quad \textrm{for a.e.} \,  {\t} \in \IR^n \setminus \mathcal{N}_{\widehat{\varphi}}.
 \end{equation}
\end{lem}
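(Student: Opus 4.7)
My approach is to reduce $f\perp V$ to the vanishing of the bracket $[\widehat{f},\widehat{\varphi}]$ on $\IT^n$, and then to rewrite that bracket via the refinement-type equations \eqref{H0orto}, \eqref{Hforto} together with the coset decomposition $(A^*)^{-1}\IZ^n=\bigsqcup_{k=0}^{d_A-1}(\mathbf{p}_k+\IZ^n)$. Since $\{\tau_{\k}\varphi : \k\in\IZ^n\}$ is a Parseval frame for $V$, it is complete in $V$, so $f\perp V$ is equivalent to $\langle f,\tau_{\k}\varphi\rangle=0$ for every $\k\in\IZ^n$. Applying Parseval on the Fourier side and using the periodization $\IR^n=\bigsqcup_{\m\in\IZ^n}([0,1)^n+\m)$ one checks that
\[
\langle f,\tau_{\k}\varphi\rangle \;=\; \int_{[0,1)^n}[\widehat{f},\widehat{\varphi}](\t)\, e^{2\pi i\k\cdot\t}\, d\t,
\]
so these inner products are precisely the Fourier coefficients of the $\IZ^n$-periodic function $[\widehat{f},\widehat{\varphi}]\in L^1(\IT^n)$ (integrability follows from $\widehat{f},\widehat{\varphi}\in L^2$ and Cauchy--Schwarz). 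Therefore $f\perp V$ is equivalent to $[\widehat{f},\widehat{\varphi}]=0$ a.e.\ on $\IT^n$, and, because $\widehat{\varphi}$ vanishes on $\mathcal{N}_{\widehat{\varphi}}+\IZ^n$, the bracket is automatically zero on $\mathcal{N}_{\widehat{\varphi}}$, so the condition only needs to be tested on $\IT^n\setminus\mathcal{N}_{\widehat{\varphi}}$.

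Next I evaluate $[\widehat{f},\widehat{\varphi}](A^*\mathbf{u})$. Inserting the equivalent forms $\widehat{f}(\s)=H_f((A^*)^{-1}\s)\widehat{\varphi}((A^*)^{-1}\s)$ and $\widehat{\varphi}(\s)=H_0((A^*)^{-1}\s)\widehat{\varphi}((A^*)^{-1}\s)$ of \eqref{Hforto}, \eqref{H0orto}, splitting the sum $\sum_{\m\in\IZ^n}$ defining the bracket according to $(A^*)^{-1}\IZ^n=\bigsqcup_{k=0}^{d_A-1}(\mathbf{p}_k+\IZ^n)$, and using the $\IZ^n$-periodicity of $H_f$ and $H_0$, one obtains
\[
[\widehat{f},\widehat{\varphi}](A^*\mathbf{u}) \;=\; \sum_{k=0}^{d_A-1} H_f(\mathbf{u}+\mathbf{p}_k)\overline{H_0(\mathbf{u}+\mathbf{p}_k)}\,[\widehat{\varphi},\widehat{\varphi}](\mathbf{u}+\mathbf{p}_k).
\]
By Theorem C applied to the Parseval family $\{\tau_{\k}\varphi\}$ (so $C=D=1$), the inner bracket equals $1$ a.e.\ on $\IR^n\setminus\mathcal{N}_{\widehat{\varphi}}$ and vanishes on $\mathcal{N}_{\widehat{\varphi}}$. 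After replacing $H_0$ and $H_f$ by zero on $\mathcal{N}_{\widehat{\varphi}}$---a harmless modification since $\widehat{\varphi}$ vanishes there and \eqref{H0orto}, \eqref{Hforto} remain valid---the right-hand side becomes exactly the sum appearing in \eqref{sumHfH0ort}.

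To finish, the induced endomorphism $\mathbf{u}\mapsto A^*\mathbf{u}\pmod{\IZ^n}$ is a measure-preserving, $d_A$-to-$1$ surjection of $\IT^n$ onto itself (Lemma K(i)), which gives that $[\widehat{f},\widehat{\varphi}]=0$ a.e.\ on $\IT^n$ if and only if $[\widehat{f},\widehat{\varphi}](A^*\mathbf{u})=0$ for a.e.\ $\mathbf{u}\in\IT^n$. Combining this with the identity above yields the desired equivalence. I expect the main subtlety to lie in the careful book-keeping of the set $\mathcal{N}_{\widehat{\varphi}}$ under both the $A^*$-scaling and the translations by the $\mathbf{p}_k$---indeed the conclusion is restricted to $\IR^n\setminus\mathcal{N}_{\widehat{\varphi}}$ precisely to side-step these ambiguities---while the interchange of summation and integration is routine, thanks to $H_0\in L^{\infty}(\IT^n)$, $H_f\in L^2(\IT^n)$, and $\widehat{f},\widehat{\varphi}\in L^2(\IR^n)$.
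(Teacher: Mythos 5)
Your proof is correct and follows essentially the same route as the paper: both reduce $f\perp V$ to the vanishing of the Fourier coefficients of the periodization of $\widehat{f}\,\overline{\widehat{\varphi}}$, split the lattice sum over the cosets $\mathbf{p}_k+\IZ^n$, and invoke Theorem C to replace $[\widehat{\varphi},\widehat{\varphi}]$ by $\chi_{\IR^n\setminus\mathcal{N}_{\widehat{\varphi}}}$. The only (cosmetic) difference is that you package the coset decomposition as an identity for $[\widehat{f},\widehat{\varphi}](A^*\mathbf{u})$ and then use the measure-preserving property of the induced endomorphism, whereas the paper performs the same change of variables inside each Fourier-coefficient integral via Lemma K(ii); your explicit remark that $H_0,H_f$ may be taken to vanish on $\mathcal{N}_{\widehat{\varphi}}$ is in fact a point the paper's write-up glosses over.
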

\begin{proof}
Let $ {\k} \in \IZ^n$. We compute the following. By Parseval's equality,
\begin{eqnarray*}
  \langle f, \tau_{\k}\varphi \rangle  &= & \langle\widehat{f}, \widehat{\tau_{\k} \varphi}  \rangle = \int_{\IR^n} \widehat{f}(\t) \overline{\widehat{\phi}({\t})} e^{2 \pi i {\k} \cdot {\bf t}} \, d{\bf t} \\
    &= & \sum_{{\bf m} \in \IZ^n}\int_{[0,1]^n - {\bf m}} D_{A^*}\widehat{f}({\bf s})  D_{A^*} \overline{\widehat{\phi}({\bf s})} ) \, e^{2 \pi i {\k} \cdot (A^*{\bf s})} \, d{\bf s}.
\end{eqnarray*}
Since $\widehat{f}$ and $\widehat{\varphi}$ are in $L^2(\IR^n)$ and $A^*(\IZ^n) \subset \IZ^n$, we have
\begin{equation*}
\langle f, \tau_{\k}\varphi \rangle =   \int_{[0,1]^n } \sum_{{\bf m} \in \IZ^n} \tau_{{\bf m}} \left( D_{A^*} \widehat{f}({\bf t})  D_{A^*} \overline{\widehat{\phi}({\bf t})} \right) e^{2 \pi i {\k} \cdot (A^*{\bf t})} \, d{\bf t}.
\end{equation*}
By \eqref{H0orto} and \eqref{Hforto},
\begin{eqnarray*}
\langle f, \tau_{\k}\varphi \rangle  &=&  d_A \int_{[0,1]^n } H_f({\t})  \overline{H_0({\t})} [\widehat{\varphi}, \widehat{\varphi}]({\bf t})  e^{2 \pi i {\k} \cdot (A^*{\bf t})} \, d{\bf t} \\
&=&  d_A \int_{[0,1]^n } H_f({\t})  \overline{H_0({\t})} \chi_{\IR^n \setminus  \mathcal{N}_{\widehat{\varphi}}}({\bf t})  e^{2 \pi i {\k} \cdot (A^*{\bf t})} \, d{\bf t},
\end{eqnarray*}
where the last equality holds according to Theorem C.
Bearing in mind that  $H_0$, $\chi_{\IR^n \setminus  \mathcal{N}_{\widehat{\varphi}}}$,   and  $e^{2 \pi i {\k} \cdot (A^*\cdot)} $ are bounded and $\IZ^n$-periodic functions and $H_f \in L^2(\IT^n)$, (ii) in Lemma~K implies that
\begin{eqnarray} \label{ortoproduct}
 \langle f, \tau_{\k}\varphi \rangle  &= &  \int_{[0,1]^n }  \left( \sum_{k =0}^{d_A-1} H_f((A^*)^{-1}{\t}+  \mathbf{p}_k )
 \overline{H_{0}((A^*)^{-1} {\t} +  \mathbf{p}_k )} \right) \\
 & & \ \ \ \ \ \times \  \chi_{\IR^n \setminus  \mathcal{N}_{\widehat{\varphi}}} ((A^*)^{-1}(\t))e^{2 \pi i {\k} \cdot {\bf t}} \, d{\bf t}. \nonumber
\end{eqnarray}
We see the necessity condition. By hypotheses we know that $f$ is orthogonal to $\tau_{\k}\varphi$, $\forall {\k} \in \IZ^n$. Thus,   \eqref{ortoproduct} gives that
$$
 \left( \sum_{k =0}^{d_A-1} H_f((A^*)^{-1}{\t}+  \mathbf{p}_k ) \overline{H_{0}((A^*)^{-1} {\t} +  \mathbf{p}_k )} \right) \chi_{\IR^n \setminus  \mathcal{N}_{\widehat{\varphi}}} ((A^*)^{-1}(\t)) = 0 \quad \textrm{a.e.},
$$
 which is the condition \eqref{sumHfH0ort}.

To see the sufficient condition, observe that $f$ is orthogonal to $V$ if an only if $f$ is orthogonal to all the generators of $V$, i.e. $\tau_{{\k}}\varphi$, $\forall {\k} \in \IZ^n.$ Therefore the proof is finished directly from  \eqref{sumHfH0ort} and \eqref{ortoproduct}.
\end{proof}

\begin{proof}[Proof of Corollary \ref{cor:main2}]
By Theorem \ref{thm:main2}, Lemma \ref{lem:ort} and the definition of $W_j$, $j \in \IZ$,  the statements follow.
\end{proof}

In these last paragraphs, we will see that the condition  at the origin  of the Fourier Transform of an involved refinable function in Extension Principles assumed by Han \cite{H:10}, \cite{H:12} is not equivalent to the condition used in this note.

The linear space of all compactly supported $C^{\infty}(\IR^n)$ (test) functions
with the usual topology will be denoted by  $\mathcal{D}(\IR^n)$. For $g \in\mathcal{D}(\IR^n) $ and $ f \in L^1_{loc}(\IR^n)$, with some abuse in the notation,  we will use
$$
\langle f,g\rangle = \int_{\IR^n} f({\t}) \overline{g({\t})} \, d{\t}.
$$

According to our context, the condition used by Han may be written as follows. Given  $f \in L^1_{loc}(\IR^n) $,  the following identity holds in the sense of distributions:
\begin{equation}\label{originH}
  \lim_{j \to \infty} |f(A^{-j} {\t})| =1,
\end{equation}
more precisely,
\begin{equation*}
  \lim_{j \to \infty} \langle |f(A^{-j} {\t})|, g\rangle = \langle 1,g\rangle, \qquad \forall g \in \mathcal{D}(\IR^n).
\end{equation*}

Let $f$ be the function in  $L^1_{loc}(\IR)$  defined by
 $$
 f(x)= \chi_{[-1,1]}(x) + \sum_{\ell=0}^{\infty} 2^{\ell + 2} \chi_{(2^{-\ell -1}, 2^{-\ell -1} + 2^{-2\ell -2})}(x),
 $$
 We will see that the origin is a point of approximate continuity of $f$
 but \eqref{originH} is not satisfied with $A=2$.

 Denote $F = [-1,1] \setminus \bigcup_{\ell=0}^{\infty} (2^{-\ell -1}, 2^{-\ell -1} + 2^{-2\ell -2})$.  Since
 \begin{eqnarray*}
 \lim_{j \to \infty} \frac{| 2^{-j}[-1,1] \bigcap F |}{| 2^{-j}[-1,1]|}
 & = & 1 -  \lim_{j \to \infty}  \frac{| 2^{-j}[-1,1] \bigcap ( \bigcup_{\ell=0}^{\infty} (2^{-\ell -1}, 2^{-\ell -1} + 2^{-2\ell -2}) )|}{| 2^{-j}[-1,1]|} \nonumber \\
 & = & 1 -  \lim_{j \to \infty}  \frac{| \bigcup_{\ell=j}^{\infty} (2^{-\ell -1}, 2^{-\ell -1} + 2^{-2\ell -2})|}{| 2^{-j}[-1,1]|} =1,
 \end{eqnarray*}
 the origin is a point of density for the set $F$.  It follows  rapidly that the origin is a point of approximate continuity of $f$.

Now we see that the function $f$ does not satisfies the condition \eqref{originH} with $A=2$.  Take $g\in \mathcal{D}(\IR)$ such that $g$ is non negative, with value $1$ on the interval $[-1,1]$, supported on $[-2,2]$, increasing on $[-2,-1]$ and decreasing on $[1,2]$.
We have the following inequalities
\begin{eqnarray*}
 \lim_{j \to \infty} \langle |f(2^{-j} {t})|, g\rangle 
 & \geq &   \lim_{j \to \infty} \int_{-1}^1 f(2^{-j}{t})  \, d{t}
 =  \lim_{j \to \infty} 2^j \int_{-2^{-j}}^{2^{-j}} f({y})  \, d{y} \\ & = & 2 +   \lim_{j \to \infty} 2^j \int_{-2^{-j}}^{2^{-j}} \sum_{\ell=0}^{\infty} 2^{\ell + 2} \chi_{(2^{-\ell -1}, 2^{-\ell -1} + 2^{-2\ell -2})}(y ) \, d{y} \\
  & = & 
  2 +    \sum_{k=0}^{\infty}  2^{-k } =4 >  \langle 1, g \rangle.
 \end{eqnarray*}
 This implies that $\eqref{originH}$ does not hold.



\end{document}